\newtheoremstyle{mystyle}{}{}{\slshape}{2pt}{\scshape}{.}{ }{} 
\newtheorem{thm}{Theorem}[section]
\newtheorem{cor}[thm]{Corollary}
\newtheorem{prop}[thm]{Proposition}
\newtheorem{lemme}[thm]{Lemma}
\newtheorem{por}[thm]{Porism}
\theoremstyle{definition}
\newtheorem{defi}[thm]{Definition}
\theoremstyle{mystyle}
\theoremstyle{remark}
\newtheorem{rem}[thm]{Remark}
\newcommand{\MM}{\mathbb M}
\DeclareMathOperator{\dom}{dom}
\title{On dp-minimal ordered structures}
\author{Pierre Simon}
\begin{document}
\maketitle
\begin{abstract}
We show basic facts about dp-minimal ordered structures. The main results are : dp-minimal groups are abelian-by-finite-exponent, in a divisible ordered dp-minimal group, any infinite set has non-empty interior, and any theory of pure tree is dp-minimal.
\end{abstract}

\section*{Introduction}

One of the latest topic of interest in abstract model theory is the study of dependent, or $NIP$, theories. The abstract general study, was initiated by Shelah in \cite{Sh715}, and pursued by him in \cite{Sh783}, \cite{Sh863} and \cite{Sh900}. One of the questions he addresses is the definition of \emph{super-dependent} as an analog of superstable for stable theories. Although, as he writes, he has not completely succeeded, the notion he defines of strong-dependence seems promising. In \cite{Sh863} it is studied in details and in particular, ranks are defined. Those so-called dp-ranks are used to prove existence of an indiscernible sub-sequence in any long enough sequence. Roughly speaking, a theory is strongly dependent if no type can fork infinitely many times, each forking being independent from the previous one. (Stated this way, it is naturally a definition of ``strong-$NTP_2$"). Also defined in that paper are notions of minimality, corresponding to the ranks being equal to 1 on 1-types. In \cite{Ons}, Onshuus and Usvyatsov extract from this material the notion of dp-minimality which seems to be the relevent one. A dp-minimal theory is a theory where there cannot be two independent witnesses of forking for a 1-type. It is shown in that paper that a stable theory is dp-minimal if and only if every 1-type has weight 1. In general, unstable, theories, one can link dp-minimality to \emph{burden} as defined by H. Adler (\cite{burden}).

Dp-minimality on ordered structures can be viewed as a generalization of weak-o-minimality. In that context, there are two main questions to address : what do definable sets in dimension 1 look like, ({\it i.e.} how far is the theory from being o-minimal), and what theorems about o-minimality go through. J. Goodrick has started to study those questions in \cite{Good}, focussing on groups. He proves that definable functions are piecewise locally monotonous extending a similar result from weak-o-minimality.
\\

In the first section of this paper, we recall the definitions and give equivalent formulations. In the second section, we make a few observations on general linearly ordered inp-minimal theories showing in particular that, in dimension 1, forking is controlled by the ordering. The lack of a cell-decomposition theorem makes it unclear how to generalize results to higher dimensions.

In section 3, we study dp-minimal groups and show that they are abelian-by-finite-exponent. The linearly ordered ones are abelian. We prove also that an infinite definable set in a dp-minimal ordered divisible group has non-empty interior, solving a conjecture of Alf Dolich.

Finally, in section 4, we give examples of dp-minimal theories. We prove that colored linear orders, orders of finite width and trees are dp-minimal.

\section{Preliminaries on dp-minimality}

\begin{defi}
(Shelah) An independence (or inp-) pattern of length $\kappa$ is a sequence of pairs$(\phi^\alpha(x,y),k^\alpha)_{\alpha < \kappa}$ of formulas such that there exists an array $\langle a^\alpha_{i}: \alpha < \kappa, i < \lambda \rangle$ for some $\lambda \geq \omega$ such that :
\begin{itemize}
\item Rows are $k^\alpha$-inconsistent : for each $\alpha<\kappa$, the set $\{\phi^\alpha(x,a^\alpha_i) : i < \lambda\}$ is $k^\alpha$-inconsistent,
\item paths are consistent : for all $\eta \in \lambda^\kappa$, the set $\{\phi^\alpha(x,a^\alpha_{\eta(\alpha)}) : \alpha < \kappa\}$ is consistent.
\end{itemize}
\end{defi}

\begin{defi}
\begin{itemize}
\item (Goodrick) A theory is inp-minimal if there is no inp-pattern of length two in a single free variable $x$.
\item (Onshuus and Usvyatsov) A theory is dp-minimal if it is $NIP$ and inp-minimal.
\end{itemize}
\end{defi}

A theory is $NTP_2$ if there is no inp-pattern of size $\omega$ for which the formulas $\phi^\alpha(x,y)$ in the definition above are all equal to some $\phi(x,y)$. It is proven in \cite{ntp2} that a theory is $NTP_2$ if this holds for formulas $\phi(x,y)$ where $x$ is a single variable. As a consequence, any inp-minimal theory is $NTP_2$.

We now give equivalent definitions (all the ideas are from \cite{Sh863}, we merely adapt the proofs there from the general $NIP$ context to the dp-minimal one).

\begin{defi}
Two sequences $(a_i)_{i \in I}$ and $(b_j)_{j\in J}$ are \emph{mutually indiscernible} if each one is indiscernible over the other.
\end{defi}

\begin{lemme}\label{defs}
Consider the following statements : 
\begin{enumerate}
\item $T$ is inp-minimal.
\item For any two mutually indiscernible sequences $A=(a_i : i< \omega)$, $B=(b_j : j<\omega)$ and any point $c$, one of the sequences $(tp(a_i/c) : i < \omega)$, $(tp(b_i/c) : i < \omega)$ is constant.
\item Same as above, but change the conclusion to : one the sequences $A$ or $B$ stays indiscernible over $c$.
\item For any indiscernible sequence $A=(a_i : i \in I)$ indexed by a dense linear order $I$, and any point $c$, there is $i_0$ in the completion of $I$ such that the two sequences $(tp(a_i/c) : i<i_0)$ and $(tp(a_i/c) : i>i_0)$ are constant.
\item Same as above, but change the conclusion to : the two sequences $(a_i : i<i_0)$ and $(a_i : i>i_0)$ are indiscernible over $c$.
\item $T$ is dp-minimal.
\end{enumerate}
Then for any theory $T$, (2), (3), (4), (5), (6) are equivalent and imply (1). If $T$ is $NIP$, then they are all equivalent.
\end{lemme}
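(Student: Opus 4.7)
The plan is to close a cycle of implications. The implication $(6) \Rightarrow (1)$ is immediate from the definition. I will establish $(2) \Rightarrow (6)$, the equivalences among $(2)$--$(5)$ by standard compactness and grouping, and $(1) + NIP \Rightarrow (2)$ to complete the cycle under the $NIP$ assumption.

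For $(2) \Rightarrow (1)$: given an inp-pattern of length two with formulas $\phi^1, \phi^2$ and arrays $(a_i), (b_j)$, a Ramsey extraction makes the rows mutually indiscernible; a realization $c$ of the initial path satisfies $\phi^1(c, a_0) \wedge \phi^2(c, b_0)$, while $k$-inconsistency of each row forces $\phi^1(c, a_i)$ and $\phi^2(c, b_j)$ to fail for all but finitely many indices, so both type sequences over $c$ are non-constant, contradicting $(2)$. For $(2) \Rightarrow NIP$: a formula with $IP$ produces, via Ramsey and compactness, a two-row mutually indiscernible array and a parameter $c$ along which the formula alternates in each row, again contradicting $(2)$. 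Combined, these give $(2) \Rightarrow NIP + (1) = (6)$.

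The equivalences among $(2)$--$(5)$ are routine. The implications $(3) \Rightarrow (2)$ and $(5) \Rightarrow (4)$ are trivial. For $(2) \Rightarrow (3)$ and $(4) \Rightarrow (5)$, I would apply the constant-type conclusion to the sequences of $n$-tuples extracted from the original sequences for each $n$, recovering full indiscernibility. For $(2) \Rightarrow (4)$: extend to a $\mathbb Q$-indexed indiscernible sequence; each cut $i_0$ yields two mutually indiscernible halves, so $(2)$ makes the set of ``left-constant'' cuts downward closed and the set of ``right-constant'' cuts upward closed; their union is all of $I$, hence they meet at a cut satisfying $(4)$. For $(4) \Rightarrow (2)$: given mutually indiscernible $A, B$, extend to $\mathbb Q$-indexed $A^*, B^*$; the shifted pair $(a^*_q, b^*_{q+s})_{q \in \mathbb Q}$ is indiscernible by mutual indiscernibility, and if both $A^*, B^*$ had non-constant type over $c$ with change cuts $i_0^A, i_0^B$, then any rational $s \neq i_0^B - i_0^A$ would make the pair's type over $c$ change at two distinct cuts, violating $(4)$.

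The technical core is $(1) + NIP \Rightarrow (2)$. Suppose $A, B$ are mutually indiscernible, $c$ is a point, and both $(tp(a_i/c))$ and $(tp(b_j/c))$ are non-constant. The key ingredient is the $NIP$ extraction lemma: if $\phi(c, a_0) \wedge \neg \phi(c, a_1)$ witnesses non-constancy, then pairing and setting $\psi(x, y_1 y_2) := \phi(x, y_1) \wedge \neg \phi(x, y_2)$, the family $\{\psi(x, a_{2i} a_{2i+1}) : i < \omega\}$ is $k$-inconsistent for some $k$---otherwise $\phi(x, \cdot)$ would admit arbitrarily long alternations along the indiscernible $(a_i)$, contradicting $NIP$. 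Doing this simultaneously on $A$ and $B$---the pairings act within each sequence so mutual indiscernibility is preserved---produces rows of $k$-inconsistent formulas $\psi_A, \psi_B$; paths are consistent because $c$ realizes the initial path by construction, and mutual indiscernibility plus compactness extends realization to every path. This is an inp-pattern of length two, contradicting inp-minimality. The main obstacle is carrying out the $NIP$ extraction symmetrically on the two sequences while preserving mutual indiscernibility and verifying path consistency of the resulting array.
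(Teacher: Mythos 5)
Most of your cycle is sound and close to the paper's: your $(2)\Rightarrow(1)$, $(2)\Rightarrow NIP$, $(6)\Rightarrow(2)$ via pairing and $NIP$-extraction, $(2)\Rightarrow(3)$ via derived sequences of $n$-tuples, and the downward-closed/upward-closed cut argument for $(2)\Rightarrow(4)$ all work (the paper organizes the cycle as $(2)\Rightarrow(3)\Rightarrow(5)\Rightarrow(4)\Rightarrow(2)$ and extracts $NIP$ from $(5)$ rather than $(2)$, but these are cosmetic differences). In $(4)\Rightarrow(2)$ your talk of ``the change cut $i_0^A$'' is imprecise --- non-constancy of $(tp(a_i/c))$ need not localize at a single cut --- but the underlying idea of shifting one sequence so that a conflict of $A$ lies entirely below a conflict of $B$ is exactly the paper's ``renumbering so that $i_0\neq j_0$'', and it goes through.

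The genuine gap is $(4)\Rightarrow(5)$. The $n$-tuple trick that proves $(2)\Rightarrow(3)$ does not transfer: there you only need to exhibit \emph{one} derived pair of mutually indiscernible tuple-sequences with non-constant types over $c$, and you may prepend the offending tuple to a tail of far-away consecutive blocks. For $(4)\Rightarrow(5)$ you would need, for each $n$, a densely indexed \emph{indiscernible} sequence of $n$-tuples whose type-constancy on the two sides of a cut controls \emph{all} increasing $n$-tuples of the original sequence on the two sides of that cut; but overlapping $n$-tuples drawn from a dense index set do not form an indiscernible sequence, disjoint blocks do not see all increasing $n$-tuples, and in any case the cut $i_0$ furnished by $(4)$ depends on $n$ and on the choice of derived sequence, so you have no single cut working simultaneously for all $n$. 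As written, $(5)$ is therefore only shown to imply the other conditions, not to follow from them. The repair is the paper's $(3)\Rightarrow(5)$: the set of cuts $i_0$ with $A_{<i_0}$ indiscernible over $c$ is downward closed, the set with $A_{>i_0}$ indiscernible over $c$ is upward closed, $(3)$ says their union is everything, and by finite character of indiscernibility each set contains its supremum (resp.\ infimum) --- exactly the argument you already use for $(2)\Rightarrow(4)$, applied to indiscernibility instead of type-constancy.
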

\begin{proof}
(2) $\Rightarrow$ (1) : In the definition of independence pattern, one may assume that the rows are mutually indiscernible. This is enough.
\\

(2) $\Rightarrow$ (3) : Assume $A=\langle a_i : i< \omega\rangle$, $B=\langle b_i : i<\omega\rangle$ and $c$ are a witness to $\neg$(3). Then there are two tuples $(i_1 < … < i_n)$, $(j_1 < … < j_n)$ and a formula $\phi(x;y_1,…,y_n)$ such that $\models \phi(c;a_{i_1},…,a_{i_n}) \wedge \neg \phi(c;a_{j_1},…,a_{j_n})$. Take an $\alpha < \omega$ greater than all the $i_k$ and the $j_k$. Then, exchanging the $i_k$ and $j_k$ if necessary, we may assume that $\models \phi(c;a_{i_1},…,a_{i_n}) \wedge \neg \phi(c;a_{n.\alpha},…,a_{n.\alpha+n-1})$. Define $A'=\langle (a_{i_1},…,a_{i_n}) \rangle ~\hat{ }~\langle (a_{n.k},…,a_{n.k+n-1}) : k \geq \alpha \rangle$. Construct the same way a sequence $B'$. Then $A'$, $B'$, $c$ give a witness of $\neg$(2).
\\

(3) $\Rightarrow$ (2) : Obvious.
\\

(3) $\Rightarrow$ (5) : Let $A=\langle a_i : i \in I\rangle$ be indiscernible and let $c$ be a point. Then assuming (3) holds, for every $i_0$ in the completion of $I$, one of the two sequences $A_{<i_0} = \langle a_i : i<i_0\rangle$ and $A_{>i_0}=\langle a_i : i>i_0\rangle$ must be indiscernible over $c$. Take any such $i_0$ such that both sequences are infinite, and assume for example that $A_{>i_0}$ is indiscernible over $c$. Let $j_0 = inf\{i \leq i_0 : A_{>i}$ is indiscernible over $c$ $\}$. Then $A_{>j_0}$ is indiscernible over $c$. If there are no elements in $I$ smaller than $j_0$, we are done. Otherwise, if $A_{<j_0}$ is not indiscernible over $c$, then one can find $j_1 < j_0$ such that again $A_{<j_1}$ is not indiscernible over $c$. By definition of $j_0$, $A_{>j_1}$ is not indiscernible over $c$ either. This contradicts (3).
\\

(5) $\Rightarrow$ (4) : Obvious.
\\

(4) $\Rightarrow$ (2) : Assume $\neg$ (2). Then one can find a witness of it consisting of two indiscernible sequences $A=\langle a_i : i\in I\rangle$, $B=\langle b_i : i\in I\rangle$ indexed by a dense linear order $I$ and a point $c$.

Now, we can find an $i_0$ in the completion of $I$ such that for any $i_1 < i_0 < i_2$ in $I$, there are $i, i'$, $i_1 < i < i_0 < i' < i_2$ such that 
$tp(a_i/c) \neq tp(a_{i'}/c)$.
Find a similar point $j_0$ for the sequence $B$. Renumbering the sequences if necessary, we may assume that $i_0 \neq j_0$. Then the indiscernible sequence of pairs $\langle(a_i,b_i) : i \in I \rangle$ gives a witness of $\neg$ (4).
\\

(6) $\Rightarrow$ (2) : Let $A$, $B$, $c$ be a witness of $\neg$ (2). Assume for example that there is $\phi(x,y)$ such that $\models \phi(c,a_0) \wedge \neg\phi(c,a_1)$. Then set $A' = \langle (a_{2k},a_{2k+1}) : k<\omega \rangle$ and $\phi'(x;y_1,y_2) = \phi(x;y_1) \wedge \neg\phi(x;y_2)$. Then by $NIP$, the set $\{\phi'(x,\bar y) : \bar y \in A'\}$ is $k$-inconsistent for some $k$. Doing the same construction with $B$ we see that we get an independence pattern of length 2.
\\

(5) $\Rightarrow$ (6) : Statement (5) clearly implies $NIP$ (because IP is always witnessed by a formula $\phi(x,y)$ with $x$ a single variable). We have already seen that it implies inp-minimality.
\end{proof}

Standard examples of dp-minimal theories include :
\begin{itemize}
\item O-minimal or weakly o-minimal theories (recall that a theory is weakly-o-minimal if every definable set in dimension 1 is a finite union of convex sets),
\item C-minimal theories,
\item $Th(\mathbf Z,+,\leq)$.
\end{itemize}
The reader may check this as an exercise or see \cite{Good}.

More examples are given in section 4 of this paper.

\section{Inp-minimal ordered structures}

Little study has been made yet on general dp-minimal ordered structures. We believe however that there are results to be found already at that general level. In fact, we prove here a few lemmas that turn out to be useful for the study of groups.

We show that, in some sense, forking in dimension 1 is controlled by the order.
\\

We consider $(M,<)$ an inp-minimal linearly ordered structure with no first nor last element. We denote by $T$ its theory, and let $\MM$ be a monster model of $T$.

\begin{lemme}\label{fork}
Let $X=X_{\bar a}$ be a definable subset of $\MM$, cofinal in $\MM$. Then $X$ is non-forking (over $\emptyset$).
\end{lemme}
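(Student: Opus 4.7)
The plan is to reduce the non-forking statement to the special case of a cofinal formula dividing, and then to derive a contradiction by exhibiting an inp-pattern of length $2$ in the single variable $x$. First I would argue the reduction. Suppose $\phi(x, \bar a)$ defines a cofinal set and forks over $\emptyset$, so that $\phi(x, \bar a) \vdash \bigvee_{j<n} \psi_j(x, \bar c_j)$ with each $\psi_j(x, \bar c_j)$ dividing over $\emptyset$. Since a finite union of bounded sets is bounded, at least one of the $\psi_j(\MM, \bar c_j)$ must be cofinal. Replacing $\phi(x, \bar a)$ by such a $\psi_j(x, \bar c_j)$, it is enough to show that no cofinal definable set can divide over $\emptyset$.

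Next, assume for contradiction that $\phi(x, \bar a)$ is cofinal and divides: extract an indiscernible sequence $(\bar a_i)_{i<\omega}$ with $\bar a_0 = \bar a$ and $\{\phi(x, \bar a_i) : i < \omega\}$ $k$-inconsistent. Since automorphisms of $\MM$ preserve the order, cofinality is a type-invariant property of $\bar a$, so every $X_{\bar a_i}$ is still cofinal. The heart of the argument is then to find a strictly increasing sequence $(y_j)_{j<\omega}$ in $\MM$ with the property that $X_{\bar a_i} \cap (y_j, y_{j+1}) \neq \emptyset$ for every $i, j < \omega$. Given this, the pair $(\phi(x, y), k)$ with row $(\bar a_i)_{i<\omega}$ together with the formula $(y_1 < x < y_2, 2)$ with row $((y_j, y_{j+1}))_{j<\omega}$ forms an inp-pattern of length $2$ in $x$: the first row is $k$-inconsistent by dividing, the second is $2$-inconsistent since the intervals are pairwise disjoint, and every path is witnessed by an element of $X_{\bar a_i} \cap (y_j, y_{j+1})$. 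This directly contradicts inp-minimality.

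The main obstacle, and the only content of the argument beyond bookkeeping, is building the sequence $(y_j)$ so that every interval $(y_j, y_{j+1})$ meets \emph{all} of the infinitely many $X_{\bar a_i}$ simultaneously. I would handle this by compactness, reducing to finite satisfiability. Fix $N, M < \omega$ and construct $y_0 < y_1 < \cdots < y_M$ by induction: pick $y_0$ arbitrarily, and given $y_j$, use the cofinality of each $X_{\bar a_i}$ (for $i < N$) to choose a witness $x_{i,j} \in X_{\bar a_i}$ above $y_j$, then place $y_{j+1}$ above all these witnesses. Saturation of the monster $\MM$ then produces the full sequence $(y_j)_{j<\omega}$ together with the required witnesses $x_{i,j}$ for all $i, j < \omega$, completing the construction and hence the proof.
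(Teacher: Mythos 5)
Your proof is correct and follows essentially the same route as the paper: reduce forking to dividing by noting that one disjunct of a cofinal set must be cofinal, then contradict inp-minimality by crossing the $k$-inconsistent row $\{\phi(x,\bar a_i)\}$ with a row of pairwise disjoint intervals each meeting every $X_{\bar a_i}$. The only difference is that you spell out, via finite approximation and saturation, the construction of the intervals that the paper compresses into ``pick by induction,'' which is a welcome clarification rather than a deviation.
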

\begin{proof}
If $X_{\bar a}$ divides over $\emptyset$, there exists an indiscernible sequence $(\bar a_i)_{i < \omega}$, $\bar a_0 = \bar a$, witnessing this. Every $X_{\bar a_i}$ is cofinal in $\MM$. Now pick by induction intervals $I_k$, $k<\omega$, with $I_k < I_{k+1}$ containing a point in each $X_{\bar a_i}$.
We obtain an inp-pattern of length 2 by considering $x \in X_{\bar a_i}$ and $x \in I_k$.

If $X_{\bar a}$ forks over $\emptyset$, it implies a disjunction of formulas that divide, but one of these formulas must be cofinal : a contradiction.
\end{proof}

A few variations are possible here. For example, we assumed that $X$ was cofinal in the whole structure $\MM$, but the proofs also works if $X$ is cofinal in a $\emptyset$-definable set $Y$, or even contains an $\emptyset$-definable point in its closure. This leads to the following results.

For $X$ a definable set, let $Conv(X)$ denote the convex hull of $X$. It is again a definable set.
\begin{por}
Let $X$ be a definable set of $\MM$ (in dimension 1). Assume $Conv(X)$ is $A$ definable. Then $X$ is non-forking over $A$.
\end{por}

\begin{por}
Let $M \prec N$ and let $p$ be a complete 1-type over $N$. If the cut of $p$ over $N$ is of the form $+\infty$, $-\infty$, $a^+$ or $a^-$ for $a \in M$, then $p$ is non-forking over $M$.
\end{por}

Proposition \ref{convfk} generalizes this.

\begin{lemme}
Let $X$ be an $A$-definable subset of $\MM$. Assume that $X$ divides over some model $M$, then : \begin{enumerate}
\item We cannot find $(a_i)_{i<\omega}$ in $M$ and points $(x_i)_{i<\omega}$ in $X(\MM)$ such that $a_0 < x_0 < a_1 < x_1 < a_2 < …$.

\item The set $X$ can be written as a finite disjoint union $X = \bigcup X_i$ where the $X_i$ are definable over $M\cup A$, and each $Conv(X_i)$ contains no $M$-point.
\end{enumerate}
\end{lemme}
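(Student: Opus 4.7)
The plan is to treat the two parts in order, with (2) bootstrapping from (1) via a finiteness claim about the cuts over $M$ realized by elements of $X$.

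For (1), I would adapt the dividing argument of Lemma \ref{fork}. Suppose for contradiction that $(a_i)_{i<\omega}$ in $M$ and $(x_i)_{i<\omega}$ in $X(\MM)$ interleave as $a_0 < x_0 < a_1 < x_1 < \cdots$. Write $X = X_{\bar d}$ with $\bar d \subseteq A$; because $X$ divides over $M$, one can produce an $M$-indiscernible sequence $(\bar d_i)_{i<\omega}$ with $\bar d_0 = \bar d$ such that $\{X_{\bar d_i} : i<\omega\}$ is $k$-inconsistent. Setting $I_k := (a_k, a_{k+1})$, the $M$-formula expressing $X_{\bar y} \cap I_k \neq \emptyset$ holds for $\bar d$ (witnessed by $x_k$), so by indiscernibility it holds for every $\bar d_i$. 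The rows $(x \in X_{\bar d_i})_{i<\omega}$ ($k$-inconsistent) and $(x \in I_k)_{k<\omega}$ ($2$-inconsistent, since the intervals are pairwise disjoint) thus form an inp-pattern of length two, contradicting inp-minimality.

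For (2), let $\mathcal{C}$ be the set of cuts of $M$ realized by elements of $X$. The key step is to show $\mathcal{C}$ is finite. Otherwise, extract from $\mathcal{C}$ a strictly monotone (say increasing) sequence $c_0 < c_1 < \cdots$, discarding finitely many initial terms so that no $c_i$ equals the cut at $-\infty$ over $M$. Choose $x_i \in X$ realizing $c_i$; between any two distinct cuts lies an $M$-point, so pick $m_i \in M$ with $x_i < m_i < x_{i+1}$, and since $M$ has no minimum also take $m_{-1} \in M$ below $x_0$. The sequence $m_{-1} < x_0 < m_0 < x_1 < m_1 < \cdots$ is then the configuration forbidden by (1). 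With $\mathcal{C} = \{c_1 < \cdots < c_n\}$ finite, pick $b_1 < \cdots < b_{n-1}$ in $M$ with $c_i < b_i < c_{i+1}$, and set $X_i := X \cap (b_{i-1}, b_i)$ with conventions $b_0 := -\infty$, $b_n := +\infty$. These sets are $(M \cup A)$-definable, pairwise disjoint, and cover $X$; moreover each $X_i$ consists precisely of the points of $X$ realizing the single cut $c_i$. Since $X$ itself contains no $M$-point (if $m \in M \cap X$ then the $M$-formula $\phi(m, \bar y)$ would hold for every $\bar d_i$, contradicting $k$-inconsistency), any $M$-point in $Conv(X_i)$ would have to sit strictly between two elements of $X_i$, which is impossible because two points realizing the same cut over $M$ are never separated by an $M$-element.

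The main obstacle, I expect, is the finiteness step in (2); part (1) is a direct variation on Lemma \ref{fork}, but converting an infinite $\mathcal{C}$ into the precise interleaved configuration needed to apply (1), while carefully handling cuts at $\pm\infty$, requires a little bookkeeping.
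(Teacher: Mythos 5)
Your proof is correct and fills in exactly what the paper leaves implicit: the paper's own proof is just ``Easy; (2) follows from (1)'', with (1) intended as the same variation on Lemma \ref{fork} that you carry out (transferring ``$X_{\bar y}$ meets $I_k$'' along the $M$-indiscernible dividing sequence to build a length-two inp-pattern), and (2) deduced from (1) via the finiteness of the set of cuts over $M$ realized in $X$, precisely as you do. The bookkeeping you worry about (the $\pm\infty$ cuts, and using $X\cap M=\emptyset$ to get strict interleaving) is handled correctly.
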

\begin{proof}
Easy ; (2) follows from (1).
\end{proof}

\begin{prop}\label{convfk}
Let $A \subset M$, with $M$, $|A|^+$-saturated, and let $p \in S_1(M)$. The following are equivalent :
\begin{enumerate}
\item The type $p$ forks over $A$,
\item There exist $a,b \in M$ such that $p\vdash a < x < b$, and $a$ and $b$ have the same type over $A$,
\item There exist $a,b \in M$ such that $p\vdash a<x<b$, and the interval $I_{a,b}= \{x : a<x<b\}$ divides over $A$.
\end{enumerate}
\end{prop}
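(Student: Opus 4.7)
The strategy is to verify the cycle $(2) \Rightarrow (3) \Rightarrow (1) \Rightarrow (2)$.

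For $(2) \Rightarrow (3)$: given $a, b \in M$ with $tp(a/A) = tp(b/A)$ and $a < b$ (forced by $p \vdash a < x < b$), I extend $(a,b)$ to an $A$-indiscernible sequence $(e_n)_{n < \omega}$ with $e_0 = a$, $e_1 = b$; indiscernibility propagates $e_0 < e_1$ into $e_n < e_{n+1}$ for all $n$, so the consecutive intervals $(e_{2k}, e_{2k+1})$ are pairwise disjoint, and the $A$-indiscernible sequence of pairs $(e_{2k}, e_{2k+1})_k$ witnesses that $I_{a,b}$ divides over $A$. The implication $(3) \Rightarrow (1)$ is immediate since $p$ then contains the dividing formula $a < x < b$.

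The substantive direction is $(1) \Rightarrow (2)$. Assuming $p$ forks over $A$, completeness of $p$ yields a formula $\phi(x;c) \in p$, with $c$ a tuple from $M$, such that $\phi(x;c)$ divides over $A$. Using $|A|^+$-saturation of $M$, I extract inside $M$ an $A$-indiscernible sequence $(c_i)_{i<\omega}$ with $c_0 = c$ and $\{\phi(x;c_i) : i<\omega\}$ $k$-inconsistent for some $k$. I argue by contradiction: assume (2) fails. Then each $1$-type over $A$ concentrates on one of $L, R$ (the two parts of the cut of $p$ in $M$), so by $A$-indiscernibility each coordinate of $c_i$ sits on a fixed side of the cut, independent of $i$.

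To contradict inp-minimality I build an inp-pattern of length two. Row 1 is $(\phi(x;c_i))_i$, $k$-inconsistent by construction. For Row 2 I take a family of pairwise disjoint intervals $(e_n, f_n)_{n<\omega}$ built, as in $(2)\Rightarrow(3)$, from an $A$-indiscernible sequence of $A$-conjugate pairs in $M$ (existence by saturation), and by a Ramsey-style extraction I arrange Row 2 mutually indiscernible with $(c_i)$. Path consistency---$\phi(x;c_i) \wedge e_n < x < f_n$ consistent for all $i, n$---should follow from the fact that $\phi(x;c_0) \in p$ meets every $M$-definable interval straddling the cut of $p$, transferred uniformly by mutual indiscernibility.

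The main obstacle is the fine calibration of Row 2: disjointness is easy from the $(2)\Rightarrow(3)$ trick, but ensuring that every $\phi(x;c_i)$ meets every $(e_n, f_n)$ requires the endpoints to be chosen compatibly with both the cut of $p$ and the dividing structure of $\phi$. The preceding lemma---forbidding an interleaving of $X$-points with model-points when $X$ divides over a model---applied to a small model containing $A$ together with an appropriate tail of $(c_i)$, is the tool I expect to use to control this intersection pattern and make the inp-pattern close.
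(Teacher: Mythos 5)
Your overall architecture --- the cycle $(2)\Rightarrow(3)\Rightarrow(1)\Rightarrow(2)$, with the last implication argued by building an inp-pattern of length two out of conjugates of a dividing formula against a row of disjoint intervals --- is the same as the paper's, but the two places where you leave work undone are exactly where the content of the proof lies. First, in $(1)\Rightarrow(2)$ you claim that forking of $p$ over $A$ yields a formula $\phi(x;c)\in p$, $c$ in $M$, which \emph{divides} over $A$. This is the forking/dividing distinction and cannot be assumed: $p$ only implies a finite disjunction of dividing formulas whose parameters need not lie in $M$ and whose disjuncts need not belong to $p$; moreover the paper deduces ``forking equals dividing'' over arbitrary sets as a \emph{corollary of} this proposition, so invoking it here would be circular. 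The paper instead argues the contrapositive: assuming $\neg(2)$, it shows no $M$-definable set in $p$ divides over $A$, and disposes of the residual disjunction as in Lemma \ref{fork}. Second, the path-consistency you flag as the ``main obstacle'' is the one real idea, and your proposal does not contain it. What $\neg(2)$ actually gives (via $|A|^+$-saturation) is that no point of $M$ left of the cut of $p$ is $A$-conjugate to one on the right, hence the cut itself is invariant under $A$-automorphisms; consequently \emph{every} conjugate $\phi(\MM;c_i)$ of a formula of $p$ accumulates at that same cut. That is what lets one choose pairwise disjoint intervals $I_k$ nested towards the cut, each meeting every $\phi(\MM;c_i)$. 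Your observation that each coordinate of $c_i$ sits on a fixed side of the cut concerns the parameters, not the solution sets, and does not do this job. Note also that Row 2 needs only to be pairwise disjoint (hence $2$-inconsistent): no indiscernibility, no $A$-conjugacy of the endpoints, and no mutual indiscernibility with Row 1 is required by the definition of an inp-pattern, so the ``fine calibration'' you worry about dissolves once the accumulation-at-the-cut point is in place.

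There is also a smaller but genuine error in $(2)\Rightarrow(3)$: a pair $a\equiv_A b$ need not extend to an $A$-indiscernible sequence \emph{beginning with} $a,b$. In a discrete linear order, $a$ and its successor $b$ are conjugate over $\emptyset$, yet indiscernibility of $(e_n)$ with $e_0=a$, $e_1=b$ would force $e_2=e_0+1=e_1$ and $e_2=e_1+1$ simultaneously. The paper avoids this by iterating an $A$-automorphism $\sigma$ with $\sigma(a)=b$, producing $a_1<a_2<\cdots$ with each consecutive pair $A$-conjugate to $(a,b)$; the intervals $I_{a_{2k},a_{2k+1}}$ are then pairwise disjoint $A$-conjugates of $I_{a,b}$, and dividing follows from the standard criterion that a formula divides over $A$ as soon as infinitely many of its $A$-conjugates are $k$-inconsistent (the indiscernible sequence is extracted afterwards by Ramsey, not imposed on the given pair). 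Your step is repaired by exactly this substitution, but as written it rests on a false claim.
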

\begin{proof}
(3) $\Rightarrow$ (1) is trivial.
\\

For (2) $\Rightarrow$ (3), it is enough to show that if $a \equiv_A b$, then $I_{a,b}$ divides over $A$. Let $\sigma$ be an $A$-automorphism sending $a$ to $b$. Then the tuple $(b=\sigma(a),\sigma(b))$ has the same type as $(a,b)$, and $a < b < \sigma(b)$. By iterating, we obtain a sequence $a_1 < a_2 < … $ such that $(a_k,a_{k+1})$ has the same type over $A$ as $(a,b)$. Now the sets $I_{a_{2k},a_{2k+1}}$ are pairwise disjoint and all have the same type over $A$. Therefore each of them divides over $M$.
\\

We now prove (1) $\Rightarrow$ (2)

Assume that (2) fails for $p$. Let $X_{\bar a}$ be an $M$-definable set such that $p\vdash X_{\bar a}$. Let $\bar a_0 = a, \bar a_1,\bar a_2, …$ be an $A$-indiscernible sequence. Note that the cut of $p$ is invariant under all $A$-automorphisms. Therefore each of the $X_{\bar a_i}$ contains a type with the same cut over $M$ as $p$. Now do a similar reasoning as in Lemma \ref{fork}.
\end{proof}

\begin{cor}
Forking equals dividing : for any $A \subset B$, any $p\in S(B)$, $p$ forks over $A$ if and only if $p$ divides over $A$.
\end{cor}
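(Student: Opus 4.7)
The direction dividing $\Rightarrow$ forking is immediate from the definitions. For the converse, suppose $p\in S_1(B)$ forks over $A$; the aim is to produce a single formula in $p$, with parameters in $B$, that divides over $A$.

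The plan is to lift the problem to a saturated model and apply Proposition~\ref{convfk}, then descend. Fix $M\supseteq B$ which is $|B|^+$-saturated (so in particular $|A|^+$-saturated, with $A\subseteq M$), and let $\tilde p\in S_1(M)$ be any complete extension of $p$. Since forking is preserved under extensions of types, $\tilde p$ still forks over $A$, so Proposition~\ref{convfk} supplies $a,b\in M$ with $a\equiv_A b$, $\tilde p\vdash a<x<b$, and with $I_{a,b}$ dividing over $A$ (as witnessed by the iterated-automorphism argument from the proof of that proposition).

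It then remains to descend from $\tilde p$ to $p$: find a formula in $p$ with parameters in $B$ that divides over $A$. The natural candidates are interval formulas $b_L<x<b_R$ with $b_L,b_R\in B$ sandwiching the cut of $p$, since any such formula lies in $p$ and encloses $(a,b)$. The strategy is to combine the $A$-indiscernible sequence of disjoint copies of $(a,b)$ with the saturation of $M$ to produce an $A$-indiscernible sequence of pairwise disjoint copies of some such pair $(b_L,b_R)$, which would show $b_L<x<b_R$ divides over $A$. This descent is the main obstacle: in general $b_L\not\equiv_A b_R$, so the clean iteration argument of Proposition~\ref{convfk} does not directly apply, and one has to leverage the dividing witness for $(a,b)$ itself. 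If no interval formula in $p$ over $B$ divides, one would argue --- in the spirit of the Porism following Lemma~\ref{fork} --- that the cut of $p$ over $B$ then admits an $A$-invariant extension to $M$, producing a non-forking extension of $p$ over $A$ and contradicting the choice of $\tilde p$.
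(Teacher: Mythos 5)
There is a genuine gap, and it is exactly the one you flag yourself: the ``descent'' from $\tilde p\in S_1(M)$ to a dividing formula over $B$ is not a technical nuisance to be handled at the end --- it \emph{is} the statement ``forking implies dividing,'' so your outline is circular at its crucial step. Your fallback (``if no interval formula in $p$ over $B$ divides, the cut of $p$ admits an $A$-invariant extension, hence a non-forking extension'') is unsubstantiated: an $A$-invariant cut does not by itself produce a non-forking complete type, and establishing that implication would again require the theorem you are trying to prove. The paper avoids this trap entirely by invoking the theorem of Chernikov and Kaplan: forking equals dividing over $A$ provided no type forks over its own base. This reduces the corollary to showing that no $p\in S_1(A)$ forks over $A$ (the reduction from $n$-types to $1$-types uses left-transitivity of non-forking, a point your proposal also skips since you silently assume $p\in S_1(B)$). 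That reduced statement is then refuted combinatorially: if $p\in S_1(A)$ forks over $A$, Proposition~\ref{convfk} lets one write $p\vdash\bigcup_{i<n}(a_i,b_i)$ with $a_i\equiv_A b_i$ and $n$ minimal; applying an $A$-automorphism moving $a_0$ to $b_0$ shifts the union off the interval $(a_0,b_0)$ and shows $p$ already implies the union of the remaining $n-1$ intervals, contradicting minimality.

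So while you correctly identified Proposition~\ref{convfk} as the relevant local tool, the two structural ingredients that make the proof work --- the Chernikov--Kaplan reduction to ``no type forks over its own base,'' and the minimal-length disjunction-of-intervals argument --- are both absent, and what you propose in their place does not close the argument.
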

\begin{proof}
By results of Chernikov and Kaplan (\cite{CK}), it is enough to prove that no type forks over its base. And it suffices to prove this for one-types (because of the general fact that if $tp(a/B)$ does not fork over $A$ and $tp(b/Ba)$ does not fork over $Aa$, then $tp(a,b/B)$ does not fork over $A$).

Assume $p\in S_1(A)$ forks over $A$. Then by the previous proposition, $p$ implies a finite disjunction of intervals $\bigcup_{i<n} (a_i,b_i)$ with $a_i \equiv_A b_i$. Assume $n$ is minimal. Without loss, assume $a_0 < a_1 < … $. Now, as $a_0 \equiv_A b_0$ we can find points $a'_i, b'_i$, with $(a_i,b_i) \equiv_A (a'_i,b'_i)$ and $a'_0 = b_0$.

Then $p$ proves $\bigcup_{i<n} (a'_i,b'_i)$. But the interval $(a_0, b_0)$ is disjoint from that union, so $p$ proves $\bigcup_{0<i<n} (a_i,b_i)$, contradicting the minimality of $n$.
\end{proof}

Note that this does not hold without the assumption that the structure is linearly ordered. In fact the standard example of the circle with a predicate $C(x,y,z)$ saying that $y$ is between $x$ and $z$ (see for example \cite{Wag}, 2.2.4.) is dp-minimal.

\begin{lemme}
Let $E$ be a definable equivalence relation on $M$, we consider the imaginary sort $S=M/E$. Then there is on $S$ a definable equivalence relation $\sim$ with finite classes such that there is a definable linear order on $S/\sim$.
\end{lemme}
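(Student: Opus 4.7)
My plan is to define $\sim$ on $S$ through convex-hull overlap. Put $C_1 \asymp C_2$ iff $Conv(C_1) \cap Conv(C_2) \neq \emptyset$; equivalently, iff neither of $C_1, C_2$ is entirely to the left of the other in $(M,<)$. This $\asymp$ is reflexive, symmetric and $\emptyset$-definable, but generally not transitive, so I let $\sim$ be its transitive closure. Each $\sim$-class $[C]$ then carries a ``total territory'' $T_{[C]} = \bigcup_{C' \sim C} Conv(C')$, which is convex as a chain of overlapping convex sets; moreover two distinct $\sim$-classes have disjoint territories, for otherwise a further $\asymp$-link would merge them. So the order on $M$ projects to a bona fide linear order on $S/\sim$ by $[C_1] < [C_2]$ iff $T_{[C_1]}$ lies strictly to the left of $T_{[C_2]}$.

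Once I show that $\sim$-classes are finite in every $\aleph_0$-saturated model, a compactness argument yields a uniform integer $N$ such that every $\sim$-class has size at most $N$; consequently $\sim$ coincides with ``connected by an $\asymp$-chain of length at most $N$'', which is first-order definable. The real content of the lemma is therefore the finiteness step.

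For finiteness, I assume towards a contradiction that some $\sim$-class is infinite, and reduce (via pigeonhole on the chain-distance from a fixed $C_0$, or by following an infinite $\asymp$-path) to having infinitely many distinct $E$-classes $(C_i)_{i < \omega}$ with $C_i \asymp C_{i+1}$. Extracting, the sequence $(C_i)$ may be taken indiscernible; picking representatives $a_i \in C_i$ and extracting once more, $(a_i)$ is indiscernible and, since the $C_i$ are pairwise disjoint, strictly monotonic. The aim is to build an inp-pattern of length $2$ using $E(x,y)$ (parameters $a_i$, pairwise $2$-inconsistent by distinctness of the classes) and $y_1 < x < y_2$ (parameters forming a sequence of pairwise disjoint intervals, pairwise $2$-inconsistent), with path-consistency supplied by each chosen class meeting each chosen interval. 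This would contradict inp-minimality.

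The main obstacle is the last part: producing disjoint intervals each of which meets every $C_i$. ``Overlap of convex hulls'' does not on its own force this, so one has to leverage the indiscernibility of $(C_i)$ --- typically by choosing within each $C_i$ a pair $(a_i, b_i)$ of indiscernibly-related elements and separating cases on the relative positions of these pairs, to obtain intervals bracketed by elements of one class and populated by elements of every other. The indiscernibility guarantees that the hit-pattern is uniform across $i$, delivering the required inp-pattern. Once this step is carried out, inp-minimality is violated and the lemma follows.
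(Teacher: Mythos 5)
Your choice of $\sim$ is not the right invariant, and the step you yourself flag as the ``main obstacle'' is not merely unfinished: it is false for that choice. Consider the o-minimal (hence inp-minimal) structure $(\mathbb{Q},<,+)$ with $E(x,y)$ defined as $x=y \vee x+y=0$, so that the classes are the pairs $\{a,-a\}$. Every convex hull $[-|a|,|a|]$ contains $0$, so all hulls pairwise overlap, your $\asymp$ is already total, and its transitive closure $\sim$ has a single infinite class. No inp-pattern can rescue this: a two-element class meets at most two pairwise disjoint intervals, so the interval row you need cannot exist. Consequently the finiteness claim fails for convex-hull overlap, and with it the reduction of $\sim$ to bounded-length $\asymp$-chains on which your definability argument for the transitive closure depends.

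The paper's proof avoids this by comparing classes through the cuts of their infima rather than through overlap: set $a/E \prec b/E$ iff $\inf(\{x:xEa\}) < \inf(\{x:xEb\})$ (a definable comparison of definable cuts), and let $\sim$ be incomparability under $\prec$, i.e.\ ``same infimum cut''. This $\sim$ is an equivalence relation outright, no transitive closure is needed, and $\prec$ linearly orders $S/\sim$ by construction. Finiteness of the $\sim$-classes is then a genuine variation on Lemma \ref{fork}: infinitely many pairwise disjoint classes sharing one infimum cut all accumulate at that cut from above, so one can choose pairwise disjoint intervals descending to the cut, each containing a point of each class; together with the formulas $xEa_i$ this yields an inp-pattern of length $2$. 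In your setup the classes share only an overlap of hulls, which provides no common accumulation point, and that is precisely why your interval row cannot be built. (In the example above, the paper's $\prec$ separates all the classes, since the infima $-|a|$ are pairwise distinct.)
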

\begin{proof}
Define a partial order on $S$ by $a/E \prec b/E$ if  $\inf(\{x:xEa\}) < \inf(\{x:xEb\})$. Let $\sim$ be the equivalence relation on $S$ defined by $x \sim y$ if $\neg(x \prec y \vee y\prec x)$. Then $\prec$ defines a linear order on $S/\sim$. The proof that $\sim$ has finite classes is another variation on the proof of \ref{fork}.
\end{proof}

From now until the end of this section, we also assume $NIP$.

\begin{lemme}
($NIP$). Let $p \in S_1(\MM)$ be a type inducing an $M$-definable cut, then $p$ is definable over $M$.
\end{lemme}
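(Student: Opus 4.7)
My plan is to split the proof into two stages: first use the $M$-definable cut and Proposition \ref{convfk} to establish that $p$ is $M$-invariant, then upgrade $M$-invariance to $M$-definability using a compactness/extraction argument supplemented by inp-minimality.

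\emph{Stage 1: $M$-invariance.} The hypothesis gives an $M$-formula $\theta(y)$ defining the cut of $p$, i.e.\ $p \vdash \theta(y) \leftrightarrow y < x$. Hence for any $a \in \MM$, the question $a < x \in p$ is decided by $\theta(a)$ and depends only on $tp(a/M)$. So if $a \equiv_M b$ in $\MM$ then $\theta(a) \leftrightarrow \theta(b)$, which prevents the configuration $p \vdash a < x < b$ with $a \equiv_M b$. Proposition \ref{convfk} then yields that $p$ does not fork over $M$, and since in $NIP$ non-forking over a model is equivalent to invariance over the model, $p$ is $M$-invariant.

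\emph{Stage 2: $M$-definability.} Fix a formula $\phi(x, y)$ and set $D_\phi = \{b \in \MM : \phi(x, b) \in p\}$. By $M$-invariance, $D_\phi$ is a union of complete $M$-types on $y$, so it corresponds to a subset $[D_\phi] \subseteq S_y(M)$; $M$-definability of $D_\phi$ is equivalent to $[D_\phi]$ being clopen. By applying the argument to $\neg \phi$ it suffices to show $[D_\phi]$ is open at every $q \in [D_\phi]$. Suppose instead that this fails at some such $q$: then for every $\psi \in q$ there is $b \in \MM$ with $\psi(b)$ and $b \notin D_\phi$. Pick such witnesses $b_n$ for a sequence of formulas $\psi_n \in q$ with $b_n \models \bigwedge_{k \leq n} \psi_k$, and apply Ramsey (or Erd\H{o}s--Rado in a sufficiently saturated monster when $q$ lacks a countable base) to extract an $M$-indiscernible subsequence $(b'_n)$. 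By indiscernibility, its common $M$-type $q^*$ lies in every $[\psi_n]$ and so equals $q$; but $M$-invariance of $D_\phi$ then forces $b'_n \in D_\phi$, contradicting $b_n \notin D_\phi$.

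\emph{Main obstacle.} The hard part is Stage 2: extracting an $M$-indiscernible sequence whose common $M$-type is exactly $q$ (rather than some other type in $\bigcap_n [\psi_n]$). For countably based $q$ a direct Ramsey argument works; in general one needs Erd\H{o}s--Rado on a large family of witnesses in a sufficiently saturated $\MM$. Inp-minimality via Lemma \ref{defs}(5) enters as a complementary tool: along any $M$-indiscernible sequence $(b_i)$ and any $c$ realising an appropriate $M$-invariant extension of $p$, the truth values $\phi(c, b_i)$ admit at most one alternation, which together with the $M$-invariance already established sharpens the contradiction and rules out any residual oscillation a purely topological extraction might leave behind.
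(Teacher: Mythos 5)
Your Stage 1 is correct and is exactly the paper's first step: the $M$-definable cut rules out the configuration of Proposition \ref{convfk}, so $p$ does not fork over $M$, and by $NIP$ it is $M$-invariant.

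Stage 2, however, has a genuine gap, and it is not one that more careful Erd\H{o}s--Rado bookkeeping can repair. Notice that nowhere in Stage 2 do you actually use the hypothesis that the cut of $p$ is $M$-definable; if the argument worked it would show that \emph{every} $M$-invariant $1$-type is $M$-definable. That is false even in dp-minimal theories: in $DLO$ with $M$ countable, the global type at a cut of $M$ with no endpoint in $M$ is $M$-invariant (it does not fork over $M$) but its $<$-definition is a non-definable initial segment of $M$. In $NIP$, invariance only buys Borel definability (bounded alternation along indiscernible sequences), never full definability. Concretely, the contradiction at the end of your extraction argument is not a contradiction: the extracted elements $b'_n$ are \emph{new} realizations whose common type is the limit type $q^*$ (equal to $q$ in the countably based case), so $M$-invariance of $D_\phi$ places the $b'_n$ inside $D_\phi$ --- but this says nothing about the original witnesses $b_n$, whose types over $M$ are different from $q$ and merely converge to it. Nothing prevents $[D_\phi]$ from failing to be open at $q$. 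The paper closes the gap by using the definable cut a second time: taking $M_1 \supseteq M$ an $|M|^+$-saturated model, the restriction $p\restriction M_1$ has a unique global extension inducing the same ($M$-definable, hence automorphism-invariant) cut, so $p$ is the unique heir of $p\restriction M_1$; a type with a unique heir over a model is definable over it, and $M$-invariance then pushes the definition down to $M$. You need some version of this ``unique heir'' step, or some other genuine use of the definability of the cut, in place of your Stage 2.
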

\begin{proof}
We know that $p$ does not fork over $M$, so by $NIP$, $p$ is $M$-invariant. Let $M_1$ be an $|M|^+$-saturated model containing $M$. Then the restriction of $p$ to $M_1$ has a unique global extension inducing the same cut as $p$. In particular $p$ has a unique heir. Being $M$-invariant, $p$ is definable over $M$.
\end{proof}

The next lemma states that members of a uniformly definable family of sets define only finitely many ``germs at $+\infty$".
\begin{lemme}\label{germ1}
($NIP$). Let $\phi(x,y)$ be a formula with parameters in some model $M_0$, $x$ a single variable. Then there are $b_1,…,b_n$ such that for every $b$, there is $\alpha \in \MM$ and $k$ such that the sets $\phi(x,b) \wedge x>\alpha$ and $\phi(x,b_k) \wedge x>\alpha$ are equal.
\end{lemme}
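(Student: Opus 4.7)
The plan is to prove the contrapositive: assuming $\{\phi(x,b)\}_b$ has infinitely many distinct germs at $+\infty$, we construct an inp-pattern of length 2 in the single variable $x$, contradicting inp-minimality.

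First, pick representatives $b_i$ of infinitely many distinct germ classes; by definition the sets $\phi(x,b_i)\triangle\phi(x,b_j)$ are cofinal in $\MM$ for all $i\neq j$. A standard Ramsey extraction yields an $M_0$-indiscernible sequence $(b_i : i \in \mathbb{Q})$ with the same property (cofinality of $\phi(x,b_i)\triangle\phi(x,b_j)$ is witnessed formula-by-formula, hence preserved by the EM-extraction). Fix disjoint ordered pairs of indices $p_k<q_k<p_{k+1}<q_{k+1}$ in $\mathbb{Q}$ and set $\psi_k(x):=\phi(x,b_{p_k})\triangle\phi(x,b_{q_k})$; each $\psi_k$ is cofinal by indiscernibility.

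The crux, and the place where dp-minimality (hence $NIP$) is really used, is to show that the sets $\psi_k$ are pairwise disjoint. Suppose $c\in\psi_k\cap\psi_{k'}$. Applying Lemma \ref{defs}(5) to the indiscernible sequence $(b_i)_{i\in\mathbb{Q}}$ and the parameter $c$, there is $i_0$ in the completion of $\mathbb{Q}$ such that both $(b_i)_{i<i_0}$ and $(b_i)_{i>i_0}$ are indiscernible over $c$. A short case analysis on the location of $i_0$ relative to $p_k<q_k<p_{k'}<q_{k'}$ shows that at least one of the two pairs $(p_\ell,q_\ell)$, $\ell\in\{k,k'\}$, lies entirely on a single side of $i_0$; indiscernibility of that side over $c$ then forces $\phi(c,b_{p_\ell})\Leftrightarrow\phi(c,b_{q_\ell})$, contradicting $c\in\psi_\ell$. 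Hence the family $\{\psi_k : k<\omega\}$ is pairwise disjoint, and in particular $2$-inconsistent.

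It remains to assemble the inp-pattern, which is routine. Using $\aleph_1$-saturation of $\MM$, build an increasing sequence $a_0<a_1<\cdots$ so that each interval $(a_n,a_{n+1})$ meets every $\psi_k$: given $a_n$, pick $c_{n,k}\in\psi_k\cap(a_n,+\infty)$ for each $k<\omega$ (possible by cofinality), then use saturation to choose $a_{n+1}$ above the countable set $\{c_{n,k}:k<\omega\}$. Taking row 1 of the array to be $(\psi_k(x))_{k<\omega}$ and row 2 to be the interval formulas $a_n<x<a_{n+1}$, both rows are $2$-inconsistent and every path is consistent since $c_{n,k}\in\psi_k\cap(a_n,a_{n+1})$. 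This yields an inp-pattern of length 2 in the single variable $x$, contradicting inp-minimality. The only non-routine point is the pairwise disjointness of the $\psi_k$'s, where the cut-form (5) of dp-minimality is decisive.
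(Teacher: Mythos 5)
Your proof is correct, and at bottom it is the same argument as the paper's, just fully unpacked: the paper defines the definable equivalence relation $bEb'$ iff $\phi(x,b)\triangle\phi(x,b')$ is not cofinal, observes that $M_0$-conjugates are $E$-equivalent because by $NIP$ the symmetric difference forks over $M_0$ while by Lemma \ref{fork} a cofinal set cannot fork, and concludes by compactness that the definable relation $E$, being coarser than type-equality over $M_0$, has finitely many classes. Your construction of the interval array $(a_n,a_{n+1})$ meeting every cofinal $\psi_k$ is precisely the content of the proof of Lemma \ref{fork}, and your extraction of an indiscernible sequence of germ-inequivalent parameters is the compactness step in contrapositive form; the one genuine variation is that you invoke condition (5) of Lemma \ref{defs} to get pairwise disjointness (2-inconsistency) of the $\psi_k$, where the paper only needs the standard $NIP$ fact that $\phi$ alternates boundedly on an indiscernible sequence, giving $N$-inconsistency --- which is already enough for an inp-pattern. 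So your route is self-contained and slightly more explicit, at the cost of redoing work the paper delegates to Lemma \ref{fork} and to the forking formulation of $NIP$.
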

\begin{proof}
Let $E$ be the equivalence relation defined on tuples by $b E b'$ iff $(\exists \alpha)(x>\alpha \rightarrow (\phi(x,b) \leftrightarrow \phi(x,b')))$. Let $b,b'$ having the same type over $M_0$. By $NIP$, the formula $\phi(x,b) \triangle \phi(x,b')$ forks over $M_0$. By Lemma \ref{fork}, this formula cannot be cofinal, so $b$ and $b'$ are $E$-equivalent. This proves that $E$ has finitely many classes.
\end{proof}

If the order is dense, then this analysis can be done also locally around a point $a$ with the same proof :
\begin{lemme}\label{germ2}
($NIP$ + dense order). Let $\phi(x,y)$ be a formula with parameters in some model $M_0$, $x$ a single variable. Then there exists $n$ such that : For any point $a$, there are $b_1,…,b_n$ such that for all $b$, there is $\alpha < a < \beta$ and $k$ such that the sets $\phi(x,b) \wedge \alpha<x<\beta$ and $\phi(x,b_k) \wedge \alpha<x<\beta$ are equal.
\end{lemme}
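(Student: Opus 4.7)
The plan is to mimic the proof of Lemma \ref{germ1}, replacing the relation ``agree for large $x$'' with ``agree on some open interval around $a$'', and then use compactness in $a$ to promote a pointwise finite bound into a uniform one. Concretely, for each $a \in \MM$ I would introduce the binary relation
$$b \mathrel{E_a} b' \iff (\exists \alpha, \beta)\bigl(\alpha < a < \beta \wedge (\forall x)(\alpha < x < \beta \to (\phi(x,b) \leftrightarrow \phi(x,b')))\bigr),$$
which is a first-order equivalence relation uniformly definable in $a$ over $M_0$. The classes of $E_a$ are exactly the germs of $\phi(x,\cdot)$ around $a$, so a bound on the number of classes is what we want.

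The main step will be to prove that $b \equiv_{M_0 a} b'$ implies $b \mathrel{E_a} b'$. As in Lemma \ref{germ1}, a Morley sequence argument using $NIP$ shows that $\psi(x) := \phi(x,b) \triangle \phi(x,b')$ forks over $M_0 a$. The new, density-dependent ingredient is that a formula forking over $M_0 a$ cannot accumulate at $a$ from either side: if $\psi$ accumulated at $a^+$, extending by compactness would yield a global 1-type with cut $a^+$ containing $\psi$, and such a type is non-forking over $M_0 a$ by Proposition \ref{convfk} (any candidate pair $(a',b')$ witnessing condition (2) would have to straddle the cut $a^+$, but $a \in M_0 a$ is fixed by $M_0 a$-automorphisms and the order is preserved, ruling out both $a' = a$ and $a' < a$). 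So $\psi$ is empty on $(\alpha, a) \cup (a, \beta)$ for some $\alpha < a < \beta$, and the equality at $x=a$ follows from $b \equiv_{M_0 a} b'$.

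Given this, finiteness of the number of $E_a$-classes for each fixed $a$ is obtained exactly as in Lemma \ref{germ1}: if there were an infinite set of pairwise $E_a$-inequivalent tuples, Ramsey would produce an $M_0 a$-indiscernible such sequence, contradicting that $M_0 a$-equivalence refines $E_a$. Finally, to get a bound $n$ independent of $a$, I would invoke compactness: ``$E_a$ has at most $n$ classes'' is a first-order condition on $a$, so if no single $n$ sufficed, the partial type asserting ``$E_a$ has more than $n$ classes'' for every $n$ would be consistent and realized by some $a$ in the monster, contradicting the previous paragraph.

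The main obstacle is the density-based claim that forking over $M_0 a$ rules out accumulation at $a$; this is the only place where density (rather than mere linearity) is essentially used, and it is precisely what Proposition \ref{convfk} was set up to handle. The remaining ingredients --- the $NIP$ reduction to symmetric-difference forking, the Ramsey-based counting argument, and the compactness argument for uniformity --- are routine adaptations of standard $NIP$ combinatorics.
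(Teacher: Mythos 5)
Your proposal is correct and follows essentially the route the paper intends: the paper's ``proof'' of Lemma~\ref{germ2} is just the instruction to redo the proof of Lemma~\ref{germ1} locally at $a$, which is exactly what you do --- define the germ relation $E_a$, show $\equiv_{M_0 a}$ refines it via $NIP$ plus the fact that a formula forking over $M_0 a$ cannot accumulate at the $M_0a$-definable point $a$, count classes by Ramsey, and get uniformity in $a$ by compactness. Your appeal to Proposition~\ref{convfk} for the non-accumulation step is a slightly heavier tool than the paper's intended ``variation of Lemma~\ref{fork}'' (a set with an $A$-definable point in its closure does not fork over $A$), but it is the same idea, and your explicit compactness argument for the uniform bound $n$ is a detail the paper leaves implicit.
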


\section{Dp-minimal groups}

We study inp-minimal groups. Note that by an example of Simonetta, (\cite{Sim}), not all such groups are abelian-by-finite. It is proven in \cite{Cmin} that C-minimal groups are abelian-by-torsion. We generalize the statement here to all inp-minimal theories.

\begin{prop}\label{inpgrp}
Let $G$ be an inp-minimal group. Then there is  a definable normal abelian subgroup $H$ such that $G/H$ is of finite exponent.\end{prop}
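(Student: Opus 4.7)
The strategy rests on a \emph{product principle} that I expect to follow directly from inp-minimality applied to cosets: if $H_1, H_2$ are definable subgroups of $G$ with $H_1 H_2 = G$ and both of infinite index, then mutually indiscernible sequences of coset representatives $(a^1_i)_{i<\omega}$ and $(a^2_j)_{j<\omega}$, together with the formulas $\phi_k(x,y) = y^{-1}x \in H_k$, produce an inp-pattern of length $2$. Rows are $2$-inconsistent because distinct cosets are disjoint, and paths are consistent because $a^1_i H_1 \cap a^2_j H_2 \neq \emptyset$ whenever $H_1 H_2 = G$. Hence at least one of $H_1, H_2$ must have finite index in $G$.

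Building on this, I would establish a Baldwin--Saxl-type stabilization for uniformly definable families of subgroups of $G$, so that any intersection reduces to a finite sub-intersection. Applying this to the family of centralizers $\{C(g): g \in G\}$, or to the normal variant $C_G(g^G) = \bigcap_h hC(g)h^{-1}$, I obtain definable normal subgroups on which a large portion of $G$ acts trivially. The candidate $H$ will then be a maximal definable abelian normal subgroup obtained as such a finite intersection of centralizers, rather than simply $Z(G)$, since the latter can be trivial in relevant examples (such as the infinite dihedral group) where the conclusion nevertheless holds with a nontrivial $H$.

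The last task is to show that $G/H$ has finite exponent. The plan is, for each $g \notin H$, to pit a subgroup built from powers of $g$ against a large centralizer via the product principle, forcing some power $g^{n(g)}$ into $H$. One then upgrades pointwise to a uniform $n$ valid for every $g$ by a further inp-minimality argument on the ascending chain of definable sets $\{g : g^n \in H\}$ as $n$ grows: if no uniform $n$ worked, one would extract a sequence of ``bad'' elements with unbounded orders modulo $H$ and confront it with a second row of cosets to produce an inp-pattern of length $2$.

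The main obstacle I anticipate is precisely this \emph{uniformity} step: turning ``each $g$ has some power in $H$'' into an absolute bound on the exponent of $G/H$. Producing a power for each individual $g$ seems accessible from the product principle alone, but forcing a single $n$ across all of $G$ will require choosing $H$ with extra care, and may force a final refinement of $H$ after the analysis is complete, namely the intersection of $H$ with a definable finite-index subgroup chosen so that the orders of its elements modulo $H$ are uniformly bounded.
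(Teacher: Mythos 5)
Your ``product principle'' is correct as stated, but it is not the right engine for this proposition, and the rest of the plan has gaps it cannot fill. The hypothesis $H_1H_2=G$ is precisely what makes the paths of your pattern consistent, and that hypothesis will essentially never be available in the situations you describe: when you ``pit a subgroup built from powers of $g$ against a large centralizer,'' the subgroup generated by $g$ is in general not definable, and there is no reason its product with a centralizer should be all of $G$. The uniformity problem you flag at the end is real, and your outline does not resolve it. Moreover, the Baldwin--Saxl stabilization you invoke is a consequence of NIP, whereas the proposition assumes only inp-minimality, so you would need to justify it (or a substitute) in this weaker setting.

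The missing idea is a sharper two-sided coset pattern that does not need $AB=G$: for definable subgroups $A,B$ and elements $a\in A$, $b\in B$, consider the formulas $\phi_k(x)=$ ``$x\in a^kB$'' and $\psi_k(x)=$ ``$x\in Ab^k$''. If no power of $a$ lies in $B$ and no power of $b$ lies in $A$, each row consists of pairwise distinct, hence disjoint, cosets ($2$-inconsistent), while every path is consistent because $a^kb^l\in a^kB\cap Ab^l$ --- this is where $a\in A$ and $b\in B$ are used, replacing your hypothesis $H_1H_2=G$. Inp-minimality thus gives, for any $x,y\in G$, some $n$ with $x^n\in C(y)$ or $y^n\in C(x)$; compactness makes the bound uniform, and taking $n=k!$ one gets that all $n$-th powers in $G$ commute pairwise. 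The subgroup $H=C(C(G^n))$ is then definable, abelian, and contains every $n$-th power, so $G/H$ has exponent dividing $n$ with no further uniformity argument needed; normality is arranged by intersecting $H$ with its conjugates, which preserves the containment of $G^n$. Your candidate $H$ (a maximal abelian normal subgroup cut out by finitely many centralizers) is both harder to produce and disconnected from the exponent bound, which is exactly where your outline stalls.
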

\begin{proof}
Let $A, B$ be two definable subgroups of $G$. If $a \in A$ and $b \in B$, then there is $n > 0$ such that either $a^n \in B$ or $b^n \in A$. To see this, assume $a^n \notin B$ and $b^n \notin A$ for
all $n > 0$. Then, for $n \neq m$, the cosets $a^m B$ and $a^n B$ are distinct, as are $A.b^m$ and $A.b^n$. Now we obtain an independence pattern of length two by considering the sequences of formulas $\phi_k(x)=``x \in a^k B$" and $\psi_k(x)=``x \in A.b^k$".

For $x \in G$, let $C(x)$ be the centralizer of $x$. By compactness, there is $k$ such that for $x,y \in G$, for some $k' \leq k$, either $x^{k'} \in C(y)$ or $y^{k'} \in C(x)$. In particular, letting $n = k!$, $x^n$ and $y^n$ commute.

Let $H = C(C(G^n))$, the bicommutant of the $n$th powers of $G$. It is an abelian definable subgroup of $G$ and for all $x \in G$, $x^n \in H$. Finally, if $H$ contains all $n$ powers then it is also the case of all conjugates of $H$, so replacing $H$ by the intersection of its conjugates, we obtain what we want.
 \end{proof}

Now we work with ordered groups.

\begin{lemme}\label{findex}
Let $G$ be an inp-minimal ordered group. Let $H$ be a definable sub-group of $G$ and let $C$ be the convex hull of $H$. Then $H$ is of finite index in $C$.
\end{lemme}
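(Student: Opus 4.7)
The plan is to argue by contradiction and build an inp-pattern of length two inside $C$, following the template of Lemma \ref{fork}.

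Suppose $[C:H]$ is infinite, and pick $a_0, a_1, \ldots \in C$ representing pairwise distinct cosets of $H$. First note that $C$ is itself a subgroup of $G$: if $h_1 \leq x \leq h_2$ and $h_3 \leq y \leq h_4$ with $h_j \in H$, then $h_1 + h_3 \leq x+y \leq h_2+h_4$ and $-h_2 \leq -x \leq -h_1$. By the definition of the convex hull, $H$ is both cofinal and coinitial in $C$, and translation by $a_i \in C$ carries this property to each coset: given $c \in C$, $c - a_i \in C$ is bounded above by some $h \in H$, hence $c \leq a_i + h \in a_i + H$.

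I then reproduce the proof of Lemma \ref{fork} inside $C$. Inductively, I pick intervals $I_k = (p_k, q_k)$ with $p_k, q_k \in C$, $I_k < I_{k+1}$, and such that each $I_k$ meets every coset $a_i + H$: given $p_k \in C$ strictly above all the previous $I_j$, the partial type
\[
\{x \in C \wedge x > p_k\} \cup \{(\exists y)(y - a_i \in H \wedge p_k < y < x) : i < \omega\}
\]
in the variable $x$ is finitely consistent by the cofinality of each $a_i + H$ in $C$, and so is realised by some $q_k$ thanks to the saturation of $\MM$.

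Finally, setting $\phi(x,y) := (x - y \in H)$ and $\psi(x,y,z) := (y < x < z)$, the two rows $\{\phi(x, a_i) : i < \omega\}$ and $\{\psi(x, p_k, q_k) : k < \omega\}$ are each $2$-inconsistent (distinct cosets of $H$ are disjoint, as are the $I_k$), while every path $\phi(x, a_i) \wedge \psi(x, p_k, q_k)$ is consistent by construction. This is an inp-pattern of length two, contradicting inp-minimality. The only real content is the cofinality observation; the rest is a direct transcription of the Lemma \ref{fork} construction from $\MM$ to the convex subgroup $C$.
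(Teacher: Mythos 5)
Your proof is correct and is essentially the paper's argument with the proof of Lemma \ref{fork} inlined: the paper reduces to $C=G$ and observes that if $H$ had infinite index some coset would fork over $\emptyset$ while all cosets are cofinal, which is exactly your cosets-versus-disjoint-intervals inp-pattern of length two. The only quibble is that you write the group additively although commutativity is established only in the following proposition (which uses this lemma); the argument is unaffected, since it only needs bi-invariance of the order and left cosets throughout.
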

\begin{proof}
We may assume that $H$ and $C$ are $\emptyset$-definable. So without loss, assume $C=G$.

If $H$ is not of finite index, there is a coset of $H$ that forks over $\emptyset$. All cosets of $H$ are cofinal in $G$. This contradicts Lemma \ref{fork}.
\end{proof}

\begin{prop}
Let $G$ be an inp-minimal ordered group, then $G$ is abelian.
\end{prop}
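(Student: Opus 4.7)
\noindent\emph{Plan.} My plan is to upgrade the ``abelian-by-finite-exponent'' decomposition of Proposition \ref{inpgrp} to ``abelian-by-finite'' using Lemma \ref{findex}, and then to apply twice the rigidity of order-preserving automorphisms of finite order on a linearly ordered set.

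First, I would take the definable normal abelian $H \trianglelefteq G$ with $G/H$ of finite exponent $n$ given by Proposition \ref{inpgrp}, and let $C$ be the convex hull of $H$ in $G$. In any linearly ordered group the convex hull of a subgroup is itself a subgroup; and since the order on $G$ is two-sided invariant, inner automorphisms of $G$ preserve the order, hence stabilize $C$. Thus $C$ is a normal subgroup of $G$. The quotient $G/C$ then inherits a linearly ordered group structure, and is a quotient of $G/H$, so it has exponent dividing $n$. An ordered group is torsion-free, so $G/C$ is trivial, i.e., $C = G$, and Lemma \ref{findex} gives $m := [G:H] < \infty$. In particular $g^m \in H$ for every $g \in G$.

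Now two applications of the following rigidity principle finish the proof: \emph{an order-preserving bijection of a linearly ordered set of finite order must be the identity} (otherwise $\sigma(x) > x$ iterates to $\sigma^n(x) > x$, contradicting $\sigma^n = \mathrm{id}$). First, for $g \in G$, conjugation $c_g$ is an order-preserving automorphism of $H$ with $c_g^m = c_{g^m} = \mathrm{id}_H$ (because $g^m \in H$ and $H$ is abelian), so $c_g|_H = \mathrm{id}$, i.e., $H \subseteq Z(G)$. Second, for any $g, g' \in G$, the automorphism $c_{g'}$ of $G$ is order-preserving, and $c_{g'}^m(g) = (g')^m g (g')^{-m} = g$ because $(g')^m \in H \subseteq Z(G)$; the same rigidity then gives $c_{g'}(g) = g$, i.e., $gg' = g'g$. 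Hence $G$ is abelian. The only delicate step is the first one---promoting ``finite exponent quotient'' to ``finite index''---and this is precisely where the order-theoretic content (Lemma \ref{findex} together with the normality of the convex hull) is used in an essential way; the rest is soft.
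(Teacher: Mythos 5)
Your proof is correct, but it takes a genuinely different route from the paper's. The paper argues locally with centralizers: for each $x$ it applies Lemma \ref{findex} to $C(x)$ inside its convex hull $D(x)$, uses uniqueness of $n$-th roots in ordered groups (if $0<a<b$ then $a^n<a^{n-1}b<\dots<b^n$) to show $(xy)^n=(yx)^n$ forces $xy=yx$, concludes that every centralizer is convex, and then observes that for $0<x<y$ the element $x$ lies in the convex subgroup $C(y)$. You instead argue globally from Proposition \ref{inpgrp}, and your key order-theoretic input --- a finite-order order-preserving bijection of a chain is the identity --- is really the same mechanism as unique roots, applied to conjugation rather than to powers; it is a clean way to package the argument, and the two rigidity applications (first pushing $H$ into the center, then using centrality of $n$-th powers to kill all inner automorphisms) are both sound. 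One remark: your first paragraph, promoting the finite-exponent quotient to a finite-index one via normality of the convex hull and Lemma \ref{findex}, is correct but unnecessary. Proposition \ref{inpgrp} already gives $g^n\in H$ for every $g$, where $n$ is the exponent of $G/H$, and that is all your two rigidity steps use; so your closing assessment that this promotion is ``the only delicate step'' is off --- it can be deleted, and with it the dependence on Lemma \ref{findex}, yielding the purely group-theoretic statement that a bi-orderable group which is (normal abelian)-by-finite-exponent is abelian. Both your argument and the paper's use bi-invariance of the order (you for order-preservation of inner automorphisms, the paper for the chain of inequalities), consistent with the paper's conventions.
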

\begin{proof}
Note that if $a,b \in G$ are such that $a^n = b^n$, then $a=b$, for if for example $0<a<b$, then $a^n < a^{n-1}b < a^{n-2}b^2 < … < b^n$.

For $x\in G$, let $C(x)$ be the centralizer of $x$. We let also $D(x)$ be the convex hull of $C(x)$. By \ref{findex}, $C(x)$ is of finite index in $D(x)$. Now take $x\in G$ and $y\in D(x)$. Then $xy$ is in $D(x)$, so there is $n$ such that $(xy)^{n} \in C(x)$. Therefore $(yx)^{n} = x^{-1} (xy)^{n} x = (xy)^{n}$. So $xy=yx$ and $y \in C(x)$. Thus $C(x)=D(x)$ is convex.

Now if $0 <x<y \in G$, then $C(y)$ is a convex subgroup containing $y$, so it contains $x$, and $x$ and $y$ commute.
\end{proof}

This answers a question of Goodrick (\cite{Good} 1.1).
\\

Now, we assume NIP, so $G$ is a dp-minimal ordered group. We denote by $G^+$ the set of positive elements of $G$.

Let $\phi(x)$ be a definable set (with parameters). For $\alpha \in G$, define $X_\alpha = \{g \in G^+ : (\forall x > \alpha) (\phi(x) \leftrightarrow \phi(x+g))\}$. Let $H_\alpha$ be equal to $X_\alpha \cup -X_\alpha \cup \{0\}$. Then $H_\alpha$ is a definable subgroup of $G$ and if $\alpha < \beta$, $H_\alpha$ is contained in $H_\beta$. Finally, let $H$ be the union of the $H_\alpha$ for $\alpha \in G$, it is the subgroup of \emph{eventual periods} of $\phi(x)$.

Now apply Lemma \ref{germ1} to the formula $\psi(x,y) = \phi(x-y)$. It gives $n$ points $b_1,…,b_n$ such that for all $b \in G$, there is $k$ such that $b-b_k$ is in $H$. This implies that $H$ has finite index in $G$.
\\

If furthermore $G$ is densely ordered, then we can do the same analysis locally. This yields a proof of a conjecture of Alf Dolich : in a dp-minimal divisible ordered group, any infinite set has non empty interior. As a consequence, a dp-minimal divisible definably complete ordered group is o-minimal.

As before, $I_{a,b}$ denotes the open interval $(a,b)$, and $\tau_b$ is the translation by $-b$. We will make use of two lemmas from \cite{Good} that we recall here for convenience.
\begin{lemme}[\cite{Good}, 3.3]\label{dense}
Let $G$ be a divisible ordered inp-minimal group, then any infinite definable set is dense in some non trivial interval.
\end{lemme}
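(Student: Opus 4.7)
The plan is to argue by contradiction: suppose $X$ is infinite yet dense in no non-trivial open interval, and extract an inp-pattern of length two, contradicting inp-minimality.

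Since $X$ is infinite, by the usual Ramsey-plus-compactness procedure I extract an $\emptyset$-indiscernible sequence $(x_i)_{i<\omega}\subset X$ which I may take to be strictly increasing. By the nowhere-density hypothesis, the open interval $(x_0,x_1)$ is not contained in the closure of $X$, so there is a sub-interval $(c_0,d_0)\subset(x_0,x_1)$ disjoint from $X$. Using indiscernibility to transport this to every consecutive pair, and using divisibility of $G$ to rescale, I may arrange that inside each $(x_i,x_{i+1})$ there is a gap $(c_i,d_i)$ of some fixed width $\delta>0$ disjoint from $X$, with the full configuration $(x_i,c_i,d_i)_{i<\omega}$ indiscernible.

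Next I build the two rows of the pattern. The first row uses the group structure: I choose a sequence $(g_j)_{j<\omega}$ of translation parameters, produced with the help of divisibility, and set $\phi_1(z;g_j):=X(z-g_j)$; the intended meaning is that the translates $X+g_j$ shift gaps of $X$ across one another so that no point lies in more than boundedly many of them, yet each translate still meets every one of the ``good'' intervals $(x_{2i},x_{2i+1})$. The second row uses the order: $\phi_2(z;x_i):=x_{2i}<z<x_{2i+1}$, which is manifestly $2$-inconsistent by disjointness. Consistency of paths reduces to the assertion that each translate $X+g_j$ contains a point of each interval $(x_{2i},x_{2i+1})$, which follows from indiscernibility and the infinitude of $X$.

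The main obstacle is the coordinated choice of the $g_j$: they must be numerous and spread out enough that the gap-patterns of the translates $X+g_j$ overlap only finitely often (giving row-one $k$-inconsistency), while simultaneously each $X+g_j$ must still meet every $(x_{2i},x_{2i+1})$ (giving path-consistency). Divisibility is crucial here and is exactly where the hypothesis is used; without it one cannot freely adjust translation increments to separate the gap-structures of different translates, and indeed the conclusion of the lemma fails in the discrete case. Once this coordination is achieved, the resulting $2\times\omega$ array is an inp-pattern of length two in a single variable, contradicting inp-minimality.
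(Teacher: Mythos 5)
First, a point of reference: the paper does not prove this lemma at all. It is quoted verbatim from Goodrick (\cite{Good}, Lemma 3.3) and used as a black box, so there is no in-paper argument to compare yours against; your attempt has to stand on its own. The overall strategy you choose --- negate the conclusion and extract an inp-pattern of length two, one row made of pairwise disjoint intervals and one row exploiting the group structure --- is the natural one for this setting. But the proof is not complete: the step you defer (``the coordinated choice of the $g_j$'') is the entire mathematical content of the lemma, and nothing you have set up makes it achievable.

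Concretely: for the first row to be $k$-inconsistent you need every $k$ of the translates $X+g_{j_1},\dots,X+g_{j_k}$ to have empty common intersection, while for path-consistency you need every single translate $X+g_j$ to meet every interval $(x_{2i},x_{2i+1})$. These demands pull in opposite directions, and acknowledging the tension is not the same as resolving it. Nothing in your construction forces bounded overlap of translates: an infinite nowhere dense set can be invariant under many translations (a ``periodic'' gap structure), in which case the natural choices of $g_j$ produce translates that coincide or overlap infinitely often, and the gaps $(c_i,d_i)$ you arranged inside each $(x_i,x_{i+1})$ do nothing to prevent this. On the other side, the claim that path-consistency ``follows from indiscernibility and the infinitude of $X$'' is not right as stated: the $g_j$ are new parameters, not part of the indiscernible configuration, so indiscernibility gives no information about whether $X+g_j$ meets $(x_{2i},x_{2i+1})$; if $X\cap(x_{2i},x_{2i+1})$ is concentrated in a tiny subinterval, a translate can miss $(x_{2i},x_{2i+1})$ entirely. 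Finally, divisibility --- which you correctly identify as essential, since the conclusion fails for the even numbers in $Th(\mathbf Z,+,\leq)$ --- is invoked (``rescale by divisibility'') but never actually used for anything; no rescaling is performed and no step of the sketch would break without it. As written, the argument establishes only the easy half (the $2$-inconsistency of the interval row) and leaves the hard half as an unproved coordination claim.
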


In the following lemma, $\overline M$ stands for the completion of $M$. By a definable function $f$ into $\overline M$, we mean a function of the form $a \mapsto \inf \phi(a;M)$ where $\phi(x;y)$ is a definable function. So one can view $\overline M$ as a collection of imaginary sorts (in which case it naturally contains only \emph{definable} cuts of $M$), or understand $f : M \rightarrow \overline M$ simply as a notation.

\begin{lemme}[\cite{Good}, 3.19]\label{bdd}
Let $f : M \rightarrow \overline M$ be a definable partial function such that $f(x)>0$ for all $x$ in the domain of $f$. Then for every interval $I$, there is a sub-interval $J \subseteq I$ and $\epsilon >0$ such that for $x\in J \cap \dom(f)$, $|f(x)|\geq \epsilon$.
\end{lemme}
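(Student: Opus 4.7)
The plan is to argue by contradiction: if the conclusion fails on some interval $I$, I will produce an inp-pattern of length two in the single free variable $x$, contradicting inp-minimality (hence dp-minimality). So suppose that on every subinterval $J \subseteq I$ and for every $\epsilon > 0$ there is $x \in J \cap \dom(f)$ with $0 < f(x) < \epsilon$. Then for each such $J$ the image $f(J \cap \dom(f))$ is an infinite definable set of positive elements with infimum $0$, so by Lemma~\ref{dense} it is dense in some non-trivial subinterval of $(0,\infty)$. By a compactness/Ramsey argument I will then extract an indiscernible sequence $(J_i)_{i<\omega}$ of pairwise disjoint subintervals of $I$ for which the sets $f(J_i \cap \dom(f))$ are uniformly dense in a common subinterval $(u,v) \subseteq (0,\infty)$.

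With such a sequence in hand, the two rows of the inp-pattern are natural. The first row uses $\phi^1(x;p,q) \equiv (p < x < q)$ with parameters $(p_i,q_i)$ defining the intervals $J_i$; this row is $2$-inconsistent since the $J_i$ are pairwise disjoint. The second row uses $\phi^2(x;\alpha,\beta) \equiv (\alpha < f(x) < \beta)$ with parameters $(\alpha_{j+1},\alpha_j)$ drawn from a strictly decreasing sequence $(\alpha_j)_{j<\omega}$ of positive elements of $(u,v)$; this row is $2$-inconsistent because its value-annuli are pairwise disjoint. Path consistency at $(i,j)$ asks for $x \in J_i \cap \dom(f)$ with $\alpha_{j+1} < f(x) < \alpha_j$, which is exactly what the density of $f(J_i \cap \dom(f))$ in $(u,v)$ provides.

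The hard part will be the extraction of the common value-interval $(u,v)$: Lemma~\ref{dense} applied to a fixed $J$ only produces some subinterval of density depending on $J$, and nothing a priori guarantees uniformity across an indiscernible sequence of subintervals of $I$. I expect to address this by applying Lemma~\ref{germ2} to the formula $\phi(x,y) \equiv (x \in \dom(f)) \wedge (f(x) < y)$: at each point $a \in I$ it provides finitely many germ-witnesses describing the behaviour of the family $\{f(x)<y\}_{y > 0}$ near $a$, uniformly in $a$. An indiscernibility argument over the parameters defining $f$ and $I$ should then let me fix $(u,v)$ over the base and refine the sequence $(J_i)$ so that each $f(J_i\cap\dom(f))$ is dense in that $(u,v)$, completing the pattern and closing the proof.
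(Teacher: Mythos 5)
The paper itself offers no proof of Lemma \ref{bdd}: it is quoted from Goodrick (\cite{Good}, 3.19) and used as a black box, so your argument has to stand on its own. Its skeleton is the right one --- negate the conclusion and build an inp-pattern of length two, one row made of pairwise disjoint subintervals of $I$, the other of pairwise disjoint value-annuli $\alpha_{j+1}<f(x)<\alpha_j$ --- but the step you yourself flag as the hard part, path consistency, is genuinely open in your write-up, and the repair you sketch does not close it. The negation of the conclusion says only that $\inf\{f(x):x\in J\cap\dom(f)\}=0$ for every subinterval $J\subseteq I$; it produces points with arbitrarily small values, not points with values in a prescribed annulus. Your route to uniformity fails twice over. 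Lemma \ref{dense} applies to infinite definable subsets of the group, whereas $f(J\cap\dom(f))$ lives in $\overline M$ and may consist entirely of irrational cuts; and even granting a version for such images, it yields density in \emph{some} interval, so for an indiscernible family $(J_i)$ you get an indiscernible family of density intervals, which may perfectly well be pairwise disjoint and located far from $0$ --- nothing forces a common $(u,v)$. Lemma \ref{germ2}, for its part, classifies the finitely many germs at a point $a$ of the sets $\phi(M,b)$ as the parameter $b$ varies; it says nothing about the image of a neighbourhood of $a$ under $f$, so it cannot deliver the uniform density you want from it.

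The good news is that no uniformity is needed if you choose the thresholds adaptively instead of fixing the annuli in advance. Fix $n$, pairwise disjoint subintervals $J_0,\dots,J_{n-1}$ of $I$, and any $\epsilon_0>0$. Given $\epsilon_k>0$, pick $x_0\in J_0\cap\dom(f)$ with $f(x_0)<\epsilon_k$, then $x_1\in J_1\cap\dom(f)$ with $f(x_1)<f(x_0)$, and so on down to $x_{n-1}$; since $f(x_{n-1})>0$ and the order is dense, choose $\epsilon_{k+1}\in M$ with $0<\epsilon_{k+1}<f(x_{n-1})$. Then every $J_i$ contains a point whose value lies in $(\epsilon_{k+1},\epsilon_k)$. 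Iterating gives, for each $n$, an $n\times n$ array with disjoint intervals in the first row and disjoint annuli in the second, both rows $2$-inconsistent and all paths consistent; compactness then yields a genuine inp-pattern of length two in the single variable $x$, contradicting inp-minimality. This uses only the ordering and dispenses with Lemmas \ref{dense} and \ref{germ2} entirely.
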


\begin{thm}\label{interior}
Let $G$ be a divisible ordered dp-minimal group. Let $X$ be an infinite definable set, then $X$ has non-empty interior.
\end{thm}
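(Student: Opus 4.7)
I argue by contradiction: assume $X$ is infinite but has empty interior. By Lemma \ref{dense}, $X$ is dense in some open interval $I$, whence $I \setminus X$ is also dense in $I$. Fix $a \in I$; the aim is to produce an open interval around $a$ lying entirely inside $X$ or entirely outside $X$, contradicting one of these density facts.

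The strategy is a local analog of the eventual-periods analysis carried out immediately before the theorem. Let $\phi(x)$ define $X$ and put $\psi(x,y) = \phi(x - y)$, so $\psi(\cdot, b)$ defines the translate $X + b$. Applying Lemma \ref{germ2} to $\psi$ at $a$ yields finitely many parameters $b_1, \ldots, b_n$ such that the germ of $X + b$ at $a$ always coincides with that of some $X + b_k$. Call $g$ a \emph{local period of $X$ at $a$} if $X$ and $X + g$ agree on some open neighborhood of $a$, and let $H_a$ be the set of local periods. Then $H_a$ is definable, contains $0$, and is closed under negation and under addition of suitably small elements; the finite-germ count partitions $G$ into finitely many definable classes under the equivalence ``$X + b$ and $X + b'$ have the same germ at $a$''.

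The crux is to upgrade this finite-partition data into a bona fide finite-index definable subgroup of local periods at $a$. The raw $H_a$ is only a ``local'' subgroup (the witnessing intervals shrink under translation); my plan is to pass instead to the diagonal stabilizer $S = \{g : X + b \text{ and } X + b + g \text{ have the same germ at } a \text{ for every } b \in G\}$, which is immediately closed under the group operations. Since $G$ is divisible and torsion-free, any finite-index definable subgroup of $G$ is all of $G$, so every $g \in G$ would then be a local period of $X$ at $a$. Applying Lemma \ref{bdd} to the definable positive function $f(g) = \sup\{\epsilon > 0 : X \cap (a - \epsilon, a + \epsilon) = (X + g) \cap (a - \epsilon, a + \epsilon)\}$ then yields an open interval $J$ and $\epsilon_0 > 0$ with $f(g) \geq \epsilon_0$ on $J$; shifting $J$ by one of its elements (using that differences of local periods are local periods), we may assume $J \supseteq (-\delta, \delta)$ for some $\delta > 0$.

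Setting $U = (a - \epsilon_0, a + \epsilon_0)$, we have $X \cap U = (X + g) \cap U$ for every $g \in (-\delta, \delta)$. Evaluating at $y = a \in U$, this reads $a \in X \iff a - g \in X$ for every such $g$, so the entire open interval $(a - \delta, a + \delta)$ either lies in $X$ --- contradicting the empty-interior hypothesis --- or is disjoint from $X$, contradicting the density of $X$ at $a$. The hardest step I anticipate is the passage to a finite-index definable subgroup of local periods at $a$: neither $H_a$ nor $S$ is obviously a finite-index subgroup of $G$ (the former fails to be closed under large additions, while the latter is a genuine subgroup whose finite index does not follow transparently from the Lemma \ref{germ2} count alone), so making this part precise --- perhaps by a more delicate interplay between dp-minimality and Lemma \ref{bdd} --- is the technical heart of the argument.
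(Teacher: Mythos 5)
You have correctly assembled the same ingredients the paper uses (Lemma \ref{dense}, Lemma \ref{germ2} applied to translates, Lemma \ref{bdd}, and divisibility), and your final step is sound: \emph{if} every $g$ in some interval around $0$ were a local period of $X$ at $a$ with a uniform radius, then a neighborhood of $a$ would lie entirely in or entirely out of $X$, contradicting density of $X$ and of its complement. But the step you yourself flag as unresolved is a genuine gap, and it does not close along the route you sketch. Lemma \ref{germ2} gives finitely many classes for the equivalence ``$\tau_b X$ and $\tau_{b'}X$ have the same germ at $a$''; your diagonal stabilizer $S$ is indeed a definable subgroup, and each germ class is a union of cosets of $S$, but finitely many classes in no way bounds the number of cosets of $S$ inside a class, so finite index of $S$ does not follow. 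Likewise your appeal to Lemma \ref{bdd} only controls $f$ on $J\cap\dom(f)=J\cap H_a$, and the ``shifting $J$'' step uses closure of $H_a$ under differences, which is exactly the property $H_a$ lacks (the witnessing neighborhoods shrink). So the technical heart of the argument is missing, not merely deferred.

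The paper closes this gap by localizing differently: it never produces a finite-index subgroup of $G$. After arranging (by shrinking $I$) that all $b\in I\cap X$ fall in the \emph{same} germ class $b_1$, and using Lemma \ref{bdd} to get a uniform radius $\epsilon$ on a subinterval $J$, it looks at $Z=L\cap X$ for a small interval $L\subseteq J$ of radius $\nu<\epsilon/2$ centered at a point of $J$ (normalized to $0$). Because any two points of $Z$ have isomorphic $\epsilon$-neighborhoods, $Z$ is closed under negation and under addition whenever the sum stays in $L$; hence $Z=H\cap L$ for a subgroup $H$ (a ``group interval''). The germ count then shows $L$ meets only finitely many cosets of $H$. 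Divisibility enters at the very end and in a different way than you propose: if $Z$ were not convex, a point $g\in L\setminus Z$ would give points $g/n\in L$ lying in infinitely many distinct cosets of $H$, a contradiction. So $Z$ is convex and $X$ contains an interval. If you want to salvage your write-up, replace the finite-index claim about $S$ with this group-interval argument.
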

\begin{proof}
Let $\phi(x)$ be a formula defining $X$.

By Lemma \ref{dense}, there is an interval $I$ such that $X$ is dense in $I$. By Lemma \ref{germ2} applied to $\psi(x;y)=\phi(y+x)$ at 0, there are $b_1,…,b_n \in M$ such that for all $b \in M$, there is $\alpha >0$ and $k$ such that $|x|<\alpha \rightarrow (\phi(b+x) \leftrightarrow \phi(b_k+x))$.

Taking a smaller $I$ and $X$, if necessary, assume that for all $b \in I \cap X$, we may take $k=1$.

Define $f : x \mapsto \sup \{y : I_{-y,y} \cap \tau_{b_1} X = I_{-y,y} \cap\tau_x X\}$, it is a function into $\overline M$, the completion of $M$. By Lemma \ref{bdd}, there is $J \subset I$ such that, for all $b \in J$, we have $|f(b)|\geq \epsilon$.

Fix $\nu < \frac \epsilon 2$ and $b \in J$ such that $I_{b-2\epsilon,b+2\epsilon} \subseteq J$ (taking smaller $\epsilon$ if necessary). Set $L = I_{b-\nu,b+\nu}$ and $Z = L \cap X$. Assume for simplicity $b=0$. Easily, if $g_1,g_2 \in Z$, then $g_1+g_2 \in Z \cup L^c$ and $-g_1 \in Z$ (because any two points of $Z$ have isomorphic neighborhoods of size $\epsilon$). So $Z$ is a group interval : it is the intersection with $I_{b-\nu,b+\nu}$ of some subgroup $H$ of $G$. Now if $x,y \in L$ satisfy that there is $\alpha >0$ such that $I_{-\alpha,\alpha}\cap \tau_x X = I_{-\alpha,\alpha} \cap \tau_y X$, then $x\equiv y$ modulo $H$. It follows that points of $L$ lie in finitely many cosets modulo $H$. Assume $Z$ is not convex, and take $g \in  L \setminus Z$. Then for each $n \in \mathbf N$, the point $g/n$ is in $L$ and the points $g/n$ define infinitely many different cosets; a contradiction.

Therefore $Z$ is convex and $X$ contains a non trivial interval.
\end{proof}

\begin{cor}
Let $G$ be a dp-minimal ordered group. Assume $G$ is divisible and definably complete, then $G$ is o-minimal.
\end{cor}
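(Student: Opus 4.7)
My plan is to show that every definable $X \subseteq G$ is a finite union of intervals and points, which is the definition of o-minimality. The two tools at hand are Theorem \ref{interior}---every infinite definable subset of $G$ has non-empty interior---and definable completeness. The first step is to consider the boundary $F := G \setminus (\mathrm{int}(X) \cup \mathrm{int}(G \setminus X))$, a definable subset, and to argue that $F$ is finite. If it were infinite, Theorem \ref{interior} would give an open interval $I \subseteq F$; then one of $X \cap I$ and $(G \setminus X) \cap I$ is infinite, and a second application of Theorem \ref{interior} would supply an open sub-interval of $G$ lying entirely in $X$ or entirely in $G \setminus X$. Every point of such a sub-interval would then lie in $\mathrm{int}(X)$ or $\mathrm{int}(G \setminus X)$, contradicting its membership in $I \subseteq F$.

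Once $F$ is known to be finite, the complement $G \setminus F$ decomposes into finitely many maximal open intervals $I_1, \dots, I_n$ (allowing $I_1$ and $I_n$ to be unbounded). On each $I_j$ I would look at the partition $I_j = (I_j \cap \mathrm{int}(X)) \sqcup (I_j \cap \mathrm{int}(G \setminus X))$ into two disjoint definable open sets. If both pieces were non-empty, I would pick $p<q$ with $p$ in the first and $q$ in the second and set $c := \sup\{x \in [p,q] : x \in \mathrm{int}(X)\}$ (which exists by definable completeness); this leads to a contradiction, because $c$ cannot lie in $\mathrm{int}(X)$ (points just above would violate the supremum), nor in $\mathrm{int}(G \setminus X)$ (an open neighborhood of $c$ in $G \setminus X$ would exclude elements of the set just below $c$), nor in $F$ (because $F \cap I_j = \emptyset$). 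So each $I_j$ sits entirely in $\mathrm{int}(X)$ or entirely in $\mathrm{int}(G \setminus X)$, and $X$ is the union of those $I_j$ lying in $\mathrm{int}(X)$ together with the (finitely many) points of $F \cap X$.

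The main obstacle is really the finiteness of $F$, which is precisely where dp-minimality and divisibility enter through Theorem \ref{interior}; the rest is the familiar definable-completeness argument that a partition of an interval into two non-empty open definable sets is impossible in a definably complete densely linearly ordered structure.
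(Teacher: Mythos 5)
Your proof is correct and follows essentially the same route as the paper: both arguments use Theorem \ref{interior} (twice) to show the boundary of a definable set $X$ is finite, and then invoke definable completeness to conclude that $X$ is a finite union of points and intervals. The only cosmetic difference is in the second half, where the paper looks at the maximal convex subset of $X$ containing a given point and notes its endpoints lie in the finite boundary, while you run the standard ``no definable partition of an interval into two nonempty open sets'' argument on the complementary intervals of the boundary; both are routine applications of definable completeness.
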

\begin{proof}
Let $X$ be a definable subset of $G$. By \ref{interior}, the (topological) border $Y$ of $X$ is finite.

Let $a \in X$, then the largest convex set in $X$ containing $a$ is definable. By definable completeness, it is an interval and its end-points must lie in $Y$.

This shows that $G$ is o-minimal.
\end{proof}

\section{Examples of dp-minimal theories}

We give examples of dp-minimal theories, namely : linear orders, order of finite width and trees.

We first look at linear orders. We consider structures of the form $(M,\leq,C_i,R_j)$ where $\leq$ defines a linear order on $M$, the $C_i$ are unary predicates (``colors"), the $R_j$ are binary monotone relations (that is $x_1 \leq x R_j y \leq y_1$ implies $x_1 R_j y_1$).

The following is a (weak) generalization of Rubin's theorem on linear orders (see \cite{Poizat}).

\begin{prop}
Let $(M,\leq,C_i,R_j)$be a colored linear order with monotone relations. Assume that all $\emptyset$-definable sets in dimension 1 are coded by a color and all monotone $\emptyset$-definable binary relations are represented by one of the $R_j$. Then the structure eliminates quantifiers.
\end{prop}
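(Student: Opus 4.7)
The plan is to apply the standard quantifier-elimination criterion: show that every formula $\exists x\,\theta(x,\bar y)$ with $\theta$ quantifier-free is equivalent modulo the theory to a quantifier-free formula in $\bar y$. After putting $\theta$ in disjunctive normal form and distributing $\exists x$ over the disjunction, one may take $\theta$ to be a conjunction of literals; literals not involving $x$ are pulled out of the existential.

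I would first split by the order type of $x$ relative to $\bar y$. If $x=y_i$ for some $i$, substitute. Otherwise $x$ lies in an open interval $(y_L,y_R)$ with $y_L,y_R\in \bar y\cup\{-\infty,+\infty\}$ consecutive in the order induced on $\bar y$; this gives finitely many cases, each a quantifier-free condition on $\bar y$. In each case the literals involving $x$ group into a Boolean combination of colors $U(x)$ together with literals $R_j(x,y_i)^{\pm}$ and $R_j(y_i,x)^{\pm}$. By monotonicity of the $R_j$, each of the latter restricts $x$ to a downset or an upset with parameter in $\bar y$, so that combined with $(y_L,y_R)$ the condition on $x$ becomes membership in an interval $(\alpha,\beta)$ whose endpoints are either elements of $\bar y$ or implicit cut points defined by an $R_j$ at some $y_i$ (so that ``$x>\alpha$'' rewrites as the corresponding $R_j$-literal in $x$).

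Next I would case-split once more on which of these candidates achieves the maximum $\alpha$ and which achieves the minimum $\beta$. Comparisons between a parameter $y_i$ and the cut of $R_j$ at $y_{i'}$ are themselves quantifier-free, being equivalent to $R_j(y_i,y_{i'})$ or its negation. After this split, each case fixes $\alpha$ and $\beta$ as specific candidates, and the residual existential reads $\exists x\,(\alpha<x<\beta\wedge U(x))$, depending only on the parameters actually occurring in $\alpha$ and $\beta$.

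The key step is to verify that this residual is quantifier-free in those parameters. When $\alpha$ and $\beta$ are both elements of $\bar y$, it asserts the existence of a $U$-point in a $\bar y$-interval: a monotone binary $\emptyset$-definable relation, hence by hypothesis represented by some $R_{j'}$. When one or both of $\alpha,\beta$ is a cut point, the constraint rewrites as the corresponding $R_j$-literal in $x$, and the resulting $\emptyset$-definable relation in the involved parameters is again monotone --- unary if only one parameter remains, handled by the hypothesis that unary $\emptyset$-definable sets are colors; binary otherwise, handled by the hypothesis on monotone binary relations, possibly after swapping the two arguments to match the convention $x_1\le x\,R\,y\le y_1 \Rightarrow x_1 R y_1$. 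Reassembling across the finitely many order-type and candidate cases yields a quantifier-free equivalent to $\exists x\,\theta$.

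The main obstacle will be the bookkeeping: systematically enumerating the candidate endpoints coming from the $R_j$-literals and verifying in each case that the residual relation in the remaining parameters really is monotone with the right convention, so that the hypotheses apply. Once the case analysis is set up, each instance is routine.
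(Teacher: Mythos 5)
Your proof is correct, and its mathematical core coincides with the paper's: in both arguments the constraints imposed on the variable being eliminated reduce to the intersection of an initial segment, a final segment and a color; the comparison of two such segments is a monotone $\emptyset$-definable binary relation (hence atomic by hypothesis); and the existence of a point of a given color in the resulting convex set is again a monotone $\emptyset$-definable relation, hence atomic. The difference is only in the packaging: the paper runs a back-and-forth in an $\omega$-saturated model and uses compactness to reduce the full quantifier-free type of the new point to a finite fragment, whereas you eliminate a single existential quantifier from a conjunction of literals directly. Your route avoids the appeal to saturation and makes the elimination more visibly effective, at the cost of the bookkeeping you acknowledge (the exhaustive case split over which endpoint candidates are binding, and the check that each residual relation is monotone with the correct orientation of its arguments). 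One small point to state more carefully: a comparison between the cut of $R_j$ at $y_i$ and the cut of $R_{j'}$ at $y_{i'}$ is not literally ``$R_j(y_i,y_{i'})$ or its negation''; it is the new relation $(\forall t)(t R_j y_i \rightarrow t R_{j'} y_{i'})$, which is quantifier-free only because it is monotone and $\emptyset$-definable and therefore represented by \emph{some} relation symbol of the language --- exactly the step the paper's proof makes explicit. With that adjustment, and recalling that a Boolean combination of colors is again a color for the same reason, every case of your analysis closes.
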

\begin{proof}
The result is obvious if $M$ is finite, so we may assume (for convenience) that this is not the case.

We prove the theorem by back-and-forth. Assume that $M$ is $\omega$-saturated and take two tuples $\bar x = (x_1,…,x_n)$ and $\bar y = (y_1,…,y_n)$ from $M$ having the same quantifier free type.

Take $x_0 \in M$; we look for a corresponding $y_0$. Notice that $\leq$ is itself a monotone relation, a finite boolean combinations of colors is again a color, a positive combination of monotone relations is again a monotone relation, and if $xRy$ is monotone $\phi(x,y)=\neg yRx$ is monotone.
By compactness, it is enough to find a $y_0$ satisfying some finite part of the quantifier-free type of $x_0$; that is, we are given
\begin{itemize}
\item One color $C$ such that $M \models C(x_0)$,
\item For each $k$, monotone relations $R_k$ and $S_k$ such that $M \models x_0 R_k x_k \wedge x_k S_k x_0$.
\end{itemize}
\vspace{3pt}
Define $U_k(x) = \{t : t R_k x_k\} $ and $V_k(x) = \{t : x S_k t\}$. The $U_k(x)$ are initial segments of $M$ and the $V_k(x)$ final segments. For each $k,k'$, either $U_k(x_k)\subseteq U_{k'}(x_{k'})$ or $U_{k'}(x_{k'}) \subseteq U_k(x_k)$. Assume for example $U_k(x_k) \subseteq U_{k'}(x_{k'})$, then this translates into a relation $\phi(x_k,x_{k'})$, where $\phi(x,y) = (\forall t)(t R_k x \rightarrow t R_{k'} y)$. Now $\phi(x,y)$ is a monotone relation itself. The assumptions on $\bar x$ and $\bar y$ therefore imply that also $U_k(y_k) \subseteq U_{k'}(y_{k'})$.

The same remarks hold for the final segments $V_k$.

Now, we may assume that $U_1(x_1)$ is minimal in the $U_k(x_k)$ and $V_l(x_l)$ is minimal in the $V_k(x_k)$. We only need to find a point $y_0$ satisfying $C(x)$ in the intersection $U_1(y_1) \cap V_l(y_l)$.

Let $\psi(x,y)$ be the relation $(\exists t)(C(t) \wedge t R_1 y \wedge x R_l t)$. This is a monotone relation. As it holds for $(x_0,x_l)$, it must also hold for $(y_0,y_l)$, and we are done.
\end{proof}

The following result was suggested, in the case of pure linear orders, by John Goodrick.

\begin{prop}\label{ordre}
Let $\mathcal M = (M,\leq,C_i,R_j)$ be a linearly ordered infinite structure with colors and monotone relations. Then $Th(\mathcal M)$ is dp-minimal.
\end{prop}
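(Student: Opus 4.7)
The plan is to expand the language by naming every $\emptyset$-definable unary subset and every $\emptyset$-definable monotone binary relation of $\mathcal M$; the previous proposition then yields quantifier elimination in this expanded structure, and since dp-minimality is preserved by such expansions, it suffices to work in the expanded language. NIP is immediate: by QE, every formula is a boolean combination of atomic formulas, and in one free variable $x$ each atomic formula defines (for fixed parameters) either a fixed color, a point, or an initial or final segment of $M$ --- each of VC dimension at most $1$, and NIP is preserved under boolean combinations.

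For inp-minimality I would argue by contradiction. Suppose there is a length-$2$ inp-pattern with formulas $\phi^0, \phi^1$ and mutually indiscernible rows $(\bar a^0_i), (\bar a^1_j)$. By QE and disjunctive normal form, each set $\phi^\alpha(x, \bar a^\alpha_i)$ is a finite union of sets of the form $F \cap J$, where $F$ is a fixed boolean combination of colors and $J$ is a convex subset of $M$ determined by $\bar a^\alpha_i$ through the order and the monotone relations. Using mutual indiscernibility together with Ramsey-type extractions, I would reduce to a sub-pattern in which all path realizations lie in a single color cell $F$ and a single convex disjunct is selected for each formula, so the pattern becomes one on convex subsets $K_i, L_j \subseteq F$. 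A further Ramsey step lets me assume both rows consist of pairwise disjoint convex sets: by the one-dimensional Helly theorem, a pairwise intersecting finite subfamily of convex sets in a linear order has non-empty common intersection, which would contradict $k$-inconsistency. The contradiction is then classical: if $K_0, K_1$ are disjoint with $K_0$ to the left of $K_1$, choose $x_j \in L_j \cap K_0$ and $y_j \in L_j \cap K_1$; since $K_0 < K_1$ gives $x_{j'} < y_j$ for all $j, j'$, the larger of $x_j, x_{j'}$ lies in both $L_j$ and $L_{j'}$ by convexity, contradicting their pairwise disjointness.

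The most delicate step will be the simultaneous Ramsey extraction, where mutual indiscernibility must be leveraged to fix both the color cell and the selected disjuncts on each side while preserving the $k$-inconsistency of rows and the consistency of paths.
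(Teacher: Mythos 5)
Your proposal is correct in outline, but it takes a genuinely different route from the paper. The paper does not work with the inp-pattern definition directly: it invokes the equivalent criterion of Lemma \ref{defs} (two mutually indiscernible sequences and a single point $\alpha$, one sequence must keep constant type over $\alpha$), uses quantifier elimination to reduce from tuples to single elements of the sequences, and then analyses, for each monotone relation $R$, the cut $i_R$ at which $\alpha R x_i$ switches truth value along the increasing sequence $(x_i)$. The contradiction is extracted from the pairwise disjoint convex sets $I(x_{i},x_{i'})=\{t : \neg tRx_{i} \wedge tRx_{i'}\}$, which by mutual indiscernibility would all have to meet the analogous sets built from the other sequence. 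Your argument instead decomposes the instances $\phi^\alpha(x,\bar a^\alpha_i)$ themselves into unions of (color cell) $\cap$ (convex set) and drives the same combinatorial contradiction --- two crossing families of pairwise disjoint convex sets --- directly from the pattern. What the paper's route buys is that everything is canonical: the point $\alpha$ is fixed, so one only tracks truth values of atomic formulas between $\alpha$ and single coordinates, and no disjunct-selection is needed; the cost is routing through the equivalences of Lemma \ref{defs}. Your route is more self-contained on the inp-minimality side but must prove NIP separately (your VC-dimension argument for that is fine) and must handle the disjunctions.

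Two cautions on the step you rightly flag as delicate. First, a naive extraction --- choosing a realization $c_{ij}$ of each path and applying a bipartite Ramsey theorem to the resulting coloring of $\omega\times\omega$ --- does not work, since infinite bipartite Ramsey fails (color $(i,j)$ by whether $i<j$). The fix is to use the \emph{canonical} coloring $P(i,j)=\{(s,t): Y^0_{s,i}\cap Y^1_{t,j}\neq\emptyset\}$, which is a definable condition on $(\bar a^0_i,\bar a^1_j)$; mutual indiscernibility forces all pairs $(\bar a^0_i,\bar a^1_j)$ to have the same type, so $P$ is constant and non-empty, and any fixed $(s_0,t_0)\in P$ selects the disjuncts. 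Shrinking each instance to a single disjunct preserves $k$-inconsistency of rows automatically. Second, your Helly step must be applied inside the color cell: the row sets are $F\cap K_i$, which are not convex in $M$, and $k$-inconsistency of $\{F\cap K_i\}$ does not give $k$-wise empty intersection of the $K_i$ themselves. Since $F\cap K_i$ is convex in the induced linear order on $F$, Helly in $(F,\leq)$ does apply, and row indiscernibility then gives that the $F\cap K_i$ are pairwise disjoint (no Ramsey needed). With these adjustments your final two-family argument goes through.
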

\begin{proof}
By the previous result, we may assume that $T= Th(\mathcal M)$ eliminates quantifiers. Let $(x_i)_{i \in I}$, $(y_i)_{i \in I}$ be mutually indiscernible sequences of $n$-tuples, and let $\alpha \in M$ be a point. We want to show that one of the following holds : 
\begin{itemize}
\item For all $i,i' \in I$, $x_i$ and $x_{i'}$ have the same type over $\alpha$, or
\item for all $i,i' \in I$, $y_i$ and $y_{i'}$ have the same type over $\alpha$.
\end{itemize}
Assume that $I$ is dense without end points.

By quantifier elimination, we may assume that $n=1$, that is the $x_i$ and $y_i$ are points of $M$. Without loss, the $(x_i)$ and $(y_i)$ form increasing sequences. Assume there exists $i<j \in I$ and $R$ a monotone definable relation such that $M \models \neg \alpha R x_i \wedge \alpha R x_j$. By monotonicity of $R$, there is a point $i_R$ of the completion of $I$ such that $i < i_R \rightarrow \neg \alpha R x_i$ and $i > i_R \rightarrow \alpha R x_i$.

Assume there is also a monotone relation $S$ and an $i_S$ such that $i < i_S \rightarrow \neg \alpha S y_i$ and $i > i_S \rightarrow \alpha S y_i$.

For points $x,y$ define $I(x,y)$ as the set of $t \in M$ such that $M \models \neg t R x \wedge t Ry$. This is an interval of $M$. Furthermore, if $i_1 < i_2 < i_3 < i_4$ are in $I$, then the intervals $I(x_{i_1},x_{i_2})$ and $I(x_{i_3},x_{i_4})$ are disjoint. Define $J(x,y)$ the same way using $S$ instead of $R$.

Take $i_0 < i_R < i_1 < i_2 < …$ and $j_0 < i_S < j_1 < j_2 < …$. For $k< \omega$, define $I_k = I(x_{i_{2k}},x_{i_{2k+1}})$ and $J_k = J(y_{j_{2k}},y_{j_{2k+1}})$. The two sequences $(I_k)$ and $(J_k)$ are mutually indiscernible sequences of disjoint intervals. Furthermore, we have $\alpha \in I_0 \wedge J_0$. By mutual indiscernibility,  $I_i \wedge J_j \neq \emptyset$ for all indices $i$ and $j$, which is impossible.
\\

We treated the case when $\alpha$ was to the left of the increasing relations $R$ and $S$. The other cases are similar.
\end{proof}

An ordered set $(M,\leq)$ is of \emph{finite width}, if there is $n$ such that $M$ has no antichain of size $n$.

\begin{cor}
Let $\mathcal M=(M,\leq)$ be an infinite ordered set of finite width, then $Th(\mathcal M)$ is dp-minimal.
\end{cor}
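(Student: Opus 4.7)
The plan is to realize $(M,\leq)$ as a reduct of a colored linear order equipped with monotone binary relations, and then invoke Proposition~\ref{ordre}. First, since $M$ has no antichain of size $n$, apply the infinite version of Dilworth's theorem (which follows by compactness from the finite case) to decompose $M = D_1 \sqcup \cdots \sqcup D_{n-1}$ as a disjoint union of chains. Choose a linear order $\leq^*$ on $M$ by concatenation: place $D_1 <^* D_2 <^* \cdots <^* D_{n-1}$, each $D_i$ ordered internally by the restriction of $\leq$ (which is linear since $D_i$ is a chain).

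For each pair $(i,j)$, introduce the binary relation
\[
R_{ij}(x,y) \;:=\; \exists x' \exists y' \bigl( x' \in D_i \wedge y' \in D_j \wedge x \leq^* x' \wedge y' \leq^* y \wedge x' \leq y' \bigr).
\]
Two routine checks: $R_{ij}$ is monotone with respect to $\leq^*$ (decreasing $x$ or increasing $y$ preserves any witnessing pair $(x',y')$), and for $x \in D_i$, $y \in D_j$ one has $R_{ij}(x,y) \Leftrightarrow x \leq y$ --- the forward direction via witnesses $x' = x$, $y' = y$, and the reverse via the observation that $\leq^*$ coincides with $\leq$ on each individual chain, so any witnesses force $x \leq x' \leq y' \leq y$. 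It follows that
\[
x \leq y \;\Longleftrightarrow\; \bigvee_{i,j} \bigl( x \in D_i \wedge y \in D_j \wedge R_{ij}(x,y) \bigr),
\]
so $(M,\leq)$ is a reduct of the colored linearly ordered structure $(M, \leq^*, D_1,\ldots,D_{n-1}, R_{ij})$. By Proposition~\ref{ordre} the latter is dp-minimal, and dp-minimality passes to reducts.

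The main obstacle is the design of the relations $R_{ij}$ in the second step. The na\"ive candidate ``$x \in D_i \wedge y \in D_j \wedge x \leq y$'' is not monotone with respect to $\leq^*$, because shrinking $x$ along $\leq^*$ can move it out of the chain $D_i$. The existential closure in the displayed definition of $R_{ij}$ repairs this, at the cost that $R_{ij}$ becomes strictly coarser than $\leq$ outside $D_i \times D_j$; but that excess is invisible once we take the disjunction over $(i,j)$ guarded by the color predicates, and hence harmless for the reduction.
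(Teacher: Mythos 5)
Your proof is correct and follows the same strategy as the paper: realize the finite-width order inside a colored linear order with monotone relations and apply Proposition~\ref{ordre}. The only difference is that where the paper outsources the encoding to Schmerl's theorem \cite{Shm}, you construct it explicitly via the infinite Dilworth decomposition, and your construction checks out --- in particular the existentially closed $R_{ij}$ correctly repairs the monotonicity failure of the naive relation while the color guards $x \in D_i \wedge y \in D_j$ eliminate the resulting excess.
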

\begin{proof}
We can define such a structure in a linear order with monotone relations : see \cite{Shm}. More precisely, there exists a structure $P=(P,\prec,R_j)$ in which $\prec$ is a linear order and the $R_j$ are monotone relations. There is a definable relation $O(x,y)$ such that the structure $(P,O)$ is isomorphic to $(M,\leq)$.

The result therefore follows from the previous one.
\end{proof}

We now move to trees. A tree is a structure $(T,\leq)$ such that $\leq$ defines a partial order on $T$, and for all $x \in T$, the set of points smaller than $x$ is linearly ordered by $\leq$. We will also assume that given $x,y \in T$, the set of points smaller than $x$ and $y$ has a maximal element $x\wedge y$ (and set $x \wedge x = x$). This is not actually a restriction, since we could always work in an imaginary sort to ensure this.

Given $a,b \in T$, we define the open ball $B(a;b)$ of center $a$ containing $b$ as the set $\{x \in T : x\wedge b > a\}$, and the closed ball of center $a$ as $\{x \in T : x \geq a\}$.

Notice that two balls are either disjoint or one is included in the other.

\begin{lemme}\label{orth}
Let $(T,\leq)$ be a tree, $a \in T$, and let $D$ denote the closed ball of center $a$. Let $\bar x =(x^1,…,x ^n) \in (T\setminus D)^n$ and $\bar y=(y^1,…,y^m) \in D^m$. Then $tp(\bar x/a) \cup tp(\bar y/a)\vdash tp(\bar x \cup \bar y/a)$.
\end{lemme}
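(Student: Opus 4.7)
My plan is an automorphism argument in a monster model $\MM$. The crucial structural input is the observation that the $\leq$-relationship between a point $x \in T \setminus D$ and a point $d \in D$ is entirely determined by $x$'s relationship with $a$. Indeed, if $x$ were comparable to $d$, then since $a \leq d$ also holds, the tree axiom (ancestors linearly ordered) would force $x$ and $a$ to be comparable; combined with $x \not\geq a$ this gives $x < a$, whence $x < a \leq d$. Contrapositively, if $x$ is incomparable with $a$ then $x$ is incomparable with every element of $D$, while if $x < a$ then $x < d$ for every $d \in D$. In particular, $x \wedge d = x \wedge a$ for any such pair.

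Given this, suppose $(\bar x_1, \bar y_1)$ and $(\bar x_2, \bar y_2)$ realize the same pair of types over $a$, i.e.\ $\bar x_1 \equiv_a \bar x_2$ and $\bar y_1 \equiv_a \bar y_2$. By homogeneity of $\MM$ there is an $a$-automorphism $\phi$ sending $\bar x_1 \mapsto \bar x_2$; since $D$ is $a$-definable, $\phi$ preserves $D$ setwise, so $\phi(\bar y_1) \in D^m$ and $\phi(\bar y_1) \equiv_a \bar y_2$. Thus it suffices to prove the following reduced claim: for any $\bar x \in (T\setminus D)^n$, and any $\bar y, \bar y' \in D^m$ with $\bar y \equiv_a \bar y'$, we have $\bar y \equiv_{a\bar x} \bar y'$.

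For the reduced claim, pick an $a$-automorphism $\sigma$ of $\MM$ with $\sigma(\bar y) = \bar y'$, and define $\tau(t) = \sigma(t)$ for $t \in D$ and $\tau(t) = t$ for $t \in T \setminus D$. Within $D$ and within $T \setminus D$ the map $\tau$ clearly preserves $\leq$ (and is bijective). Across the boundary, for $d \in D$ and $x \notin D$, the relations $d \leq x$ and $x \leq d$ depend only on $x$'s position relative to $a$ by the key observation; since $\tau$ fixes $x$ and $\sigma(d)$ still lies in $D$, these relations are preserved. Hence $\tau$ is an order-automorphism, and as the language is pure order this makes $\tau$ a full automorphism of $\MM$ fixing $a$ and $\bar x$ while sending $\bar y$ to $\bar y'$, giving the desired conclusion.

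The main obstacle is precisely this verification that the hybrid $\tau$ preserves $\leq$ across the $D/(T \setminus D)$ boundary; it is where the tree axiom enters essentially, forcing the cross-boundary order relations to be determined by the $T \setminus D$ side alone. Once this rigidity is pinned down, the remaining pieces reduce to routine homogeneity of the monster.
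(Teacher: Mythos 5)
Your proof is correct. The paper disposes of this lemma in one line --- ``a straightforward back-and-forth, noticing that $tp(\bar x/a)\cup tp(\bar y/a)\vdash tp_{qf}(\bar x \cup \bar y/a)$'' --- and the quantifier-free observation it invokes is exactly your key structural fact: for $x\notin D$ and $d\in D$, the relations $x\leq d$ and $d\leq x$ (and hence $x\wedge d$) are determined by the position of $x$ relative to $a$ alone. Where you differ is in the mechanism built on top of that fact. The paper runs a back-and-forth between two realizations of the pair of types, using saturation for the extension step; you instead glue the identity on $T\setminus D$ to an $a$-automorphism of $D$ and check directly that the hybrid map $\tau$ is an order-automorphism of the monster, which yields $\bar y\equiv_{a\bar x}\bar y'$ at once and then the lemma by one further application of homogeneity. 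Your route is slightly slicker: it avoids verifying the extension property of a back-and-forth system, and it makes transparent why the lemma persists in the language $\{\leq,\wedge\}$ and for colored trees (both pieces of $\tau$ preserve colors, and $\wedge$ is order-definable), which the paper needs later. The only point worth recording explicitly in your case analysis is the direction $d\leq x$ with $d\in D$, $x\notin D$: it is vacuous on both sides, since $a\leq d\leq x$ would force $x\in D$; that is a one-line addition, not a gap.
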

\begin{proof}
A straightforward back-and-forth, noticing that $tp(\bar x/a) \cup tp(\bar y/a)\vdash tp_{qf}(\bar x \cup \bar y/a)$ (quantifier-free type).
\end{proof}

We now work in the language $\{\leq, \wedge\}$, so a sub-structure is a subset closed under $\wedge$.
\begin{prop}\label{typear}
Let $A=(a_1,…,a_n)$, $B=(b_1,…,b_n)$ be two sub-structures from $T$. Assume :
\begin{enumerate}
\item $A$ and $B$ are isomorphic as sub-structures,
\item for all $i,j$ such that $a_i \geq a_j$, $tp(a_i,a_j) = tp(b_i,b_j)$.
\end{enumerate}
Then $tp(A)=tp(B)$.
\end{prop}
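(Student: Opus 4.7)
The plan is to prove the proposition by back-and-forth in a saturated monster of $T$: I show that whenever $A=(a_1,\ldots,a_n)$ and $B=(b_1,\ldots,b_n)$ satisfy (1) and (2), then for any $c \in T$ there exists $d \in T$ such that, after $\wedge$-closing, the tuples $A\cup\{c\}$ and $B\cup\{d\}$ (with matching enumerations of the new elements) again satisfy (1) and (2). Iterating and swapping sides yields a partial elementary bijection $A\mapsto B$, which forces $tp(A)=tp(B)$.

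Adding $c$ and closing under $\wedge$ produces, beyond $c$ itself, at most the meets $c\wedge a_i$, all lying in the chain below $c$; let $a^{*}=\max_i(c\wedge a_i)$. The placement of $c$ relative to $A$ is captured by where $a^{*}$ sits --- equal to $c$ itself, equal to some $a_k\in A$ with $a_k<c$, or a fresh point below $c$ with $a^{*}=c\wedge a_i=c\wedge a_j$ for distinct $i,j$ --- together with the comparison type of $c$ with each $a_i\geq a^{*}$. I treat these three cases in turn: in each, I designate a base point $a^{\flat}\in A$ immediately below $a^{*}$ (or no base, when $a^{*}$ lies in the free part of the tree), and apply Lemma~\ref{orth} at $b^{\flat}$ on the $B$-side. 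This splits the search for $d$ into two independent sub-problems, above and not above $b^{\flat}$: the not-above half is controlled by (1) and by condition (2) on existing pairs, while the above half asks for a realization of a partial type whose consistency follows from Lemma~\ref{orth} applied on the $A$-side plus condition (2) for the $a_i$'s above $a^{\flat}$. Saturation supplies $d$ (and, in the third case, a fresh branching point $b^{*}$ between $b^{\flat}$ and $d$).

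I expect the main obstacle to be the third case, where the new branching point $a^{*}$ does not belong to $A$: one must produce, on the $B$-side, a point $b^{*}=d\wedge b_i=d\wedge b_j$ simultaneously realizing the correct 1-type, every 2-type $tp(a^{*},a_i)$ with $a_i>a^{*}$, and (if present) $tp(a^{*},a^{\flat})$. The joint consistency of these requirements is precisely the content of Lemma~\ref{orth} at $a^{\flat}$ on the $A$-side, transferred to the $B$-side via (1) and (2); once $b^{*}$ is found, locating $d$ above $b^{*}$ collapses to the second case applied inside the cone above $b^{*}$, so the recursion on the number of elements of $A$ strictly above the base closes the argument.
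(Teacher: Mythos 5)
Your overall architecture coincides with the paper's: a back-and-forth in an $\omega$-saturated model maintaining invariants (1) and (2), a reduction to the case where the new point together with $A$ is $\wedge$-closed (your observation that the only new meet is $a^{*}=\max_i(c\wedge a_i)$ is correct), a three-way case split on whether $a^{*}$ equals $c$, lies in $A$, or is fresh, and the use of Lemma~\ref{orth} to reduce the verification of (2) to the points on one side of a base point. However, there is one concrete gap at the step you rely on most heavily.

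The gap is in your second case (to which your third case also reduces): when $c$ branches off an existing point $a_k=a^{*}$ into an open ball $B(a_k;c)$ containing no point of $A$, you must find $d>b_k$ with $tp(b_k,d)=tp(a_k,c)$ \emph{and} with $d\wedge b_i=b_k$ for every $b_i>b_k$, i.e.\ $d$ must avoid all the occupied open balls $B(b_k;b_i)$. You assert that the consistency of this partial type "is precisely the content of Lemma~\ref{orth} applied on the $A$-side plus condition (2)." It is not: Lemma~\ref{orth} only separates the \emph{closed ball} of center $b_k$ from its complement, whereas $d$ and all the $b_i>b_k$ live inside that closed ball, so the lemma gives no information about placing $d$ in a fresh open ball; and condition (2) only controls each pair $(b_k,b_i)$ individually, which tells you the type of each occupied ball but not that an unoccupied ball of the right type exists. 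The missing idea is a counting argument: if $n$ of the balls $B(a_k;a_i)$ have the same type $p$ over $a_k$ as $B(a_k;c)$, then $tp(a_k)$ already asserts the existence of at least $n+1$ pairwise disjoint open balls of type $p$ at its center (witnessed by $B(a_k;c)$ itself); since $tp(b_k)=tp(a_k)$, and since by (1) and (2) exactly $n$ of the balls $B(b_k;b_i)$ have type $p$, saturation yields a fresh ball of type $p$ at $b_k$ and a point $d$ in it with the required $2$-type. Without this transfer of the ball count through the $1$-type of the center, your partial type for $d$ has no justification of consistency, and the "recursion on the number of elements above the base" does not repair it, since the problematic configuration can already involve the whole of $A$ sitting above $a_k$. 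The rest of your plan (the case $a^{*}=c$, the insertion of a fresh branching point $b^{*}$ via a $3$-type over $(b^{\flat},b_{\min})$, and the uses of Lemma~\ref{orth} to check invariant (2) afterwards) is sound and matches the paper.
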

\begin{proof}
We do a back-and-forth. Assume $\mathcal T$ is $\omega$-saturated and $A$, $B$ satisfy the hypothesis. We want to add a point $a$ to $A$. We may assume that $A \cup \{a\}$ forms a sub-structure (otherwise, if some $a_i \wedge a$ is not in $A \cup \{a\}$, add first this element).

We consider different cases :
\begin{enumerate}
\item The point $a$ is below all points of $A$. Without loss $a_0$ is the minimal element of $A$ (which exists because $A$ is closed under $\wedge$). Then find a $b$ such that $tp(a_0,a)=tp(b_0,b)$. For any index $i$, we have : $tp(a_i,a_0)=tp(b_i,b_0)$ and $tp(a,a_0)=tp(b,b_0)$. By Lemma \ref{orth}, $tp(a_i,a)=tp(b_i,b)$.

\item The point $a$ is greater than some point in $A$, say $a_1$, and the open ball $\mathfrak a:=B(a_1;a)$ contains no point of $A$.

Let $\mathcal A$ be the set of all open balls $B(a_1;a_i)$ for $a_i > a_1$. Let $n$ be the number of balls in $\mathcal A$ that have the same type $p$ as $\mathfrak a$. Then $tp(a_1)$ proves that there are at least $n+1$ open balls of type $p$ of center $a_1$. Therefore, $tp(b_1)$ proves the same thing. We can therefore find an open ball $\mathfrak b$ of center $b_1$ of type $p$ that contains no point from $B$. That ball contains a point $b$ such that $tp(b_1,b)=tp(a_1,a)$. Now, if $a_i$ is smaller than $a_1$, we have $tp(a_i,a_1)=tp(b_i,b_1)$ and $tp(a_1,a)=tp(b_1,b)$, therefore by Lemma \ref{orth}, $tp(a,a_i)=tp(b,b_i)$.

The fact that we have taken $b$ in a new open ball of center $b_1$ ensures that $B\cup \{b\}$ is again a sub-structure and that the two structures $A\cup \{a\}$ and $B\cup \{b\}$ are isomorphic.

\item The point $a$ is between two points of $A$, say $a_0$ and $a_1$ ($a_0 < a_1$), and there are no points of $A$ between $a_0$ and $a_1$.

Find a point $b$ such that $tp(a_0,a_1,a)=tp(b_0,b_1,b)$. Then if $i$ is such that $a_i > a$, we have $a_i \geq a_1$ and again by Lemma \ref{orth}, $tp(a_i,a) = tp(b_i,b)$. And same if $a_i < a$.

\end{enumerate}

\end{proof}

\begin{cor}\label{3types}
Let $A \subset T$ be any subset. Then $\bigcup_{(a,b,c) \in A^3} tp(a,b,c) \vdash tp(A)$.
\end{cor}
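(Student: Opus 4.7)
The plan is to apply Proposition~\ref{typear} to the $\wedge$-closure $A' = A \cup \{a \wedge b : a, b \in A\}$ of $A$. By compactness it suffices to handle the case where $A = \{a_1, \ldots, a_n\}$ is finite: given a second tuple $B = \{b_1, \ldots, b_n\}$ with $tp(a_i, a_j, a_k) = tp(b_i, b_j, b_k)$ for all triples, I need to conclude $tp(A) = tp(B)$, for which I aim to produce enumerations of $A'$ and $B'$ satisfying the two hypotheses of Proposition~\ref{typear}.

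The first step is to verify that the map $a_i \mapsto b_i$ extends to a $\wedge$-isomorphism $A' \to B'$, i.e.\ hypothesis (1). The key ingredient is the elementary fact that in a tree, for any triple $(a, b, c)$ the two smallest among the pairwise meets $a \wedge b$, $a \wedge c$, $b \wedge c$ coincide (and equal $a \wedge b \wedge c$). From this, the relation ``$a \wedge b \leq c$'' for $a, b, c \in A$ is detectable inside $tp(a, b, c)$, since it says exactly that $a \wedge b$ is the minimum of $\{a \wedge b, a \wedge c, b \wedge c\}$. Consequently, for $a, b, c, d \in A$, the equality $a \wedge b = c \wedge d$ reduces to the four conditions $a \wedge b \leq c$, $a \wedge b \leq d$, $c \wedge d \leq a$, $c \wedge d \leq b$, each controlled by a 3-type from $A$.

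For hypothesis (2) of Proposition~\ref{typear}, I need $tp(x, y) = tp(x', y')$ for each comparable pair $x \geq y$ in $A'$, with $(x', y')$ the corresponding pair in $B'$. If both coordinates lie in $A$ this is just a 2-type and poses no issue; if exactly one is a meet, say $y = a_k \wedge a_l$, then $y$ is $\emptyset$-definable from $(a_k, a_l)$, so $tp(x, y)$ is read off from $tp(x, a_k, a_l)$. The main obstacle is the case where both $x = a_k \wedge a_l$ and $y = a_m \wedge a_n$ are genuine meets, as this a priori involves four elements of $A$. My plan here is to invoke the three-meets principle a second time: from $y \leq x \leq a_k$ one has $a_m \wedge a_n \leq a_k$, which forces $a_m \wedge a_n$ to be the minimum of $\{a_k \wedge a_m, a_k \wedge a_n, a_m \wedge a_n\}$ and hence to coincide with one of $a_k \wedge a_m$ or $a_k \wedge a_n$. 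Which alternative occurs is visible in $tp(a_k, a_m, a_n)$, and in either case the pair $(x, y)$ becomes $\emptyset$-definable from a triple in $A$, so its 2-type is controlled by a 3-type. Combined with the first step, this supplies exactly the data needed to invoke Proposition~\ref{typear} and conclude.
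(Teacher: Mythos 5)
Your proof is correct and follows essentially the same route as the paper: apply Proposition~\ref{typear} to the $\wedge$-closure, recover its isomorphism type from the 3-types, and reduce the 2-type of a comparable pair of meets to a 3-type via the fact that the two smallest of the three pairwise meets of a triple coincide. You merely spell out in more detail the step the paper states as ``$m_1$ and $m_2$ are both definable using only 3 of the points.''
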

\begin{proof}
Let $A_0$ be the substructure generated by $A$. By the previous theorem the following set of formulas implies the type of $A_0$ :
\begin{itemize}
\item the quantifier-free type of $A_0$,
\item the set of 2-types $tp(a,b)$ for $(a,b) \in A_0^2$, $a < b$.
\end{itemize}
We need to show that those formulas are implied by the set of 3-types of elements of $A$. We may assume $A$ is finite.

First, the knowledge of all the 3-types is enough to construct the structure $A_0$. To see this, start of example with a point $a \in A$ maximal. Knowing the 3-types, one knows in what order the $b \wedge a, b\in A$ are placed. Doing this for all such $a$, enables one to reconstruct the tree $A_0$.

Now take $m_1 = a\wedge b$, $m_2=c \wedge d$ for $a,b,c,d \in A$ such that $m_1 \leq m_2$. The points $m_1$ and $m_2$ are both definable using only 3 of the points $a,b,c,d$, say $a,b,c$. Then $tp(a,b,c) \vdash tp(m_1,m_2)$.
\end{proof}

The previous results are also true, with the same proofs, for colored trees.
\\

It is proven in \cite{Par} that theories of trees are $NIP$. We give a more precise result.

\begin{prop}
Let $\mathcal T=(T,\leq,C_i)$ be a colored tree. Then $Th(\mathcal T)$ is dp-minimal.
\end{prop}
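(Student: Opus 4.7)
The plan is to verify condition (3) of Lemma \ref{defs}: given mutually indiscernible sequences $A=(a_i)_{i\in I}$, $B=(b_j)_{j\in I}$, indexed by a dense linear order without endpoints, and a point $c$, at least one of $A,B$ stays indiscernible over $c$. Splitting tuples into coordinates, one may take the $a_i,b_j$ to be singletons. Applying Corollary \ref{3types} to any finite subset of $A\cup\{c\}$, the indiscernibility of $A$ over $c$ reduces to showing that the $3$-types $tp(a_i,a_j,c)$ depend only on the relative order of $i,j$, the $3$-types internal to $A$ being already controlled by its indiscernibility (and similarly for $B$).

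The central analytic ingredient is a classification of $tp(a_i,a_j,c)$ in a colored tree. By the ultrametric identity --- for any $x,y,z$ in $T$, two of $x\wedge y$, $y\wedge z$, $x\wedge z$ coincide and equal the minimum of the three --- the quantifier-free type of $(a_i,a_j,c)$ is determined by the quantifier-free $2$-type of $(a_i,a_j)$ (fixed by indiscernibility of $A$) together with the branch of the tree above $a_i\wedge a_j$ in which $c$ lies and the colored quantifier-free type of the meet $c\wedge a_i$ inside that branch. In particular this type takes only boundedly many values as $(i,j)$ vary over pairs with $i<j$.

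Suppose, toward a contradiction, that neither $A$ nor $B$ is indiscernible over $c$. Applying the convexity argument of (3)$\Rightarrow$(5) in Lemma \ref{defs} to the index order, one obtains cuts $i_A^\ast, j_B^\ast\in\overline I$ at which the $3$-type of $(a_i,a_j,c)$, respectively $(b_i,b_j,c)$, switches. By the preceding classification each switch is detected by finitely many quantifier-free conditions built from $\leq$, $\wedge$ and the colors; pick definable relations $R_A(x,y)$, $R_B(x,y)$ (with parameters from $A$, $B$) whose values at $(a_i,c)$, resp.\ $(b_i,c)$, change across the respective cut. Converting the cuts into ``interval-like'' formulas as in the proof of Proposition \ref{ordre} --- pairs $(a_{i_{2k}}, a_{i_{2k+1}})$ straddling the cut produce, through the difference $R_A(a_{i_{2k+1}},x)\wedge \neg R_A(a_{i_{2k}},x)$, a $k$-inconsistent row on $x=c$, and analogously for $B$ --- the mutual indiscernibility of $A$ and $B$ then guarantees consistency along every path of the array. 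This yields an inp-pattern of length $2$, contradicting inp-minimality. Combined with $NIP$ (proved in \cite{Par}), dp-minimality follows.

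The main obstacle is the bridge between the qualitative ``change of $3$-type across the cut'' and the quantitative ``$k$-inconsistent row of a single formula'': the richness of the tree (compared to a linear order) allows $c$ to interact with the sequence in several colored ``directions'', and one must exploit the rigidity of indiscernible sequences in trees --- that their pairwise meets $a_i\wedge a_j$ follow one of a few canonical patterns --- in order to isolate a single relation witnessing the cut, in a form capable of producing disjoint witnesses for the inp-array.
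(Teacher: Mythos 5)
Your proposal has two genuine gaps. First, the reduction to singletons: ``splitting tuples into coordinates'' does not work, since the failure of indiscernibility over $c$ of a sequence of tuples need not be visible in any single coordinate sequence. Corollary \ref{3types} only reduces the problem to pairs $(b_i,c_i)$; to get down to points one must exhibit a single-point indiscernible sequence $(d_i)$, definable from the original one, that still fails to be indiscernible over $c$. The paper does this by a case analysis on the configuration of the meets $m_i=b_i\wedge c_i$ (all equal, linearly ordered with or without a coordinate above all of them, pairwise incomparable), choosing $d_i$ to be $b_i$, $c_i$ or $m_i$ accordingly; nothing in your sketch substitutes for this step.

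Second, and more seriously, the heart of the argument --- that $c$ cannot simultaneously destroy the indiscernibility of both sequences of points --- is not actually carried out. You propose to imitate Proposition \ref{ordre}: extract from each cut a formula $R_A$, form the differences $R_A(a_{i_{2k+1}},x)\wedge\neg R_A(a_{i_{2k}},x)$ as a $k$-inconsistent row, and assemble an inp-pattern of length $2$. But in a tree these difference sets need not be pairwise disjoint, nor $k$-inconsistent for any $k$: a point can interact with an indiscernible sequence through the ball structure in ways that do not linearize, which is precisely the difficulty you yourself flag in your closing paragraph as ``the main obstacle''. A proof cannot leave that bridge unbuilt, because it is where all the tree-specific content lives. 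The paper takes a different route that avoids inp-patterns altogether: it classifies indiscernible sequences of points into four shapes (monotone; fans with constant meet; meets depending only on the smaller, resp.\ larger, index), reduces the last two to the monotone case via $a_i'=a_i\wedge a_j$, defines geometrically what it means for $\alpha$ to \emph{cut} a sequence (entering the relevant ball or initial segment), shows via Lemma \ref{orth} that a point which does not cut a sequence leaves it indiscernible, and then verifies directly that one point cannot cut two mutually indiscernible sequences. Your quantifier-free classification of triples $tp(a_i,a_j,c)$ is also insufficient as stated, since trees in the language $\{\leq,\wedge,C_i\}$ do not eliminate quantifiers; one needs Proposition \ref{typear} to pass from quantifier-free data plus $2$-types to full types. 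Without these ingredients the argument does not close.
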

\begin{proof}

We will use criterium (5) of \ref{defs} : if $(a_i)_{i \in I}$ and $(b_j)_{j \in J}$ are mutually indiscernible sequences and $\alpha \in T$ is a point, then one of the sequences $(a_i)$ and $(b_j)$ is indiscernible over $\alpha$.

We will always assume that the index sets ($I$ and $J$) are dense linear orders without end points.
\\

\textbf{1)} We start by showing the result assuming the $a_i$ and $b_j$ are points (not tuples).

We classify the indiscernible sequence $(a_i)$ in 4 classes depending on its quantifier-free type.

\begin{description}
\item[ I] The sequence $(a_i)$ is monotonous (increasing or decreasing).
\item[II] The $a_i$ are pairwise incomparable and $a_i \wedge a_j$ is constant equal to some point $\beta$.
\item[III] The $a_i$ are incomparable and $a_i \wedge a_j$, $i < j$ depends only on $i$. Then let $a_i' = a_i \wedge a_j$ (for some $i < j$). The $a_i'$ form an increasing indiscernible sequence.
\item[IV] The $a_i$ are incomparable and $a_i \wedge a_j$, $i < j$ depends only on $j$. Then the $a_j' = a_i \wedge a_j$ ($i<j$) form a decreasing indiscernible sequence.
\end{description}

Assume $(a_i)$ lands in case \textbf{I}. Consider the set $\{x : x< \alpha\}$. If that set contains a non-trivial subset of the sequence $(a_i)$, we say that $\alpha$ \emph{cuts} the sequence. If this is not the case, then the sequence $(a_i)$ stays indiscernible over $\alpha$. To see this, assume for example that $(a_i)$ is increasing and that $\alpha$ is greater that all the $a_i$. Take two sets of indices $i_1 < … < i_n$ and $j_1 < … < j_n$ and a $k \in I$ greater that all those indices. Then $tp(a_{i_1},…,a_{i_n}/a_k) = tp(a_{j_1},…,a_{j_n}/a_k)$. Therefore by Lemma \ref{orth}, $tp(a_{i_1},…,a_{i_n}/\alpha) = tp(a_{j_1},…,a_{j_n}/\alpha)$.

In case \textbf{II}, note that if $(a_i)$ is not $\alpha$-indiscernible, then there is $i \in I$ such that $\alpha$ lies in the open ball $B(\beta;a_i)$ (we will also say that $\alpha$ \emph{cuts} the sequence $(a_i)$). This follows easily from Proposition \ref{typear}.

In the last two cases, if $(a_i)$ is $\alpha$-indiscernible, then it is also the case for $(a_i')$. Conversely, if $(a_i')$ is $\alpha$-indiscernible, then $\alpha$ does not cut the sequence $(a_i')$. From \ref{typear}, it follows easily that $(a_i)$ is also $\alpha$-indiscernible. We can therefore replace the sequence $(a_i)$ by $(a_i')$ which belongs to case \textbf{I}.
\\

Going back to the initial data, we may assume that $(a_i)$ and $(b_j)$ are in case \textbf{I} or \textbf{II}. It is then straightforward to check that $\alpha$ cannot cut both sequences. For example, assume $(a_i)$ is increasing and $(b_j)$ is in case \textbf{II}. Then define $\beta$ as $b_i \wedge b_j$ (any $i,j$). If $\alpha$ cuts $(b_j)$, then $\alpha > \beta$. But $(a_i)$ is $\beta$-indiscernible. So $\beta$ does not cut $(a_i)$. The only possibility for $\alpha$ to cut $(a_i)$ is that $\beta$ is smaller that all the $a_i$ and the $a_i$ lie in the same open ball of center $\beta$ as $\alpha$. But then the $a_i$ lie in the same open ball of center $\beta$ as one of the $b_j$. This contradicts mutual indiscernability.
\\

\textbf{2)} Reduction to the previous case. We show that if $(a_i)_{i\in I}$ is an indiscernible sequence of $n$-tuples and $\alpha \in T$ such that $(a_i)$ is not $\alpha$-indiscernible, then there is an indiscernible sequence $(d_i)_{i \in I}$ of points of $T$ in $dcl((a_i))$ such that $(d_i)$ is not $\alpha$-indiscernible.
\\

First, by \ref{3types}, we may assume that $n=2$. Write $a_i = (b_i,c_i)$ and define $m_i = b_i \wedge c_i$.

We again study different cases : 
\begin{enumerate}
\item The $m_i$ are all equal to some $m$.

As $(a_i)$ is not $\alpha$-indiscernible, necessarily, $\alpha > m$ and the ball $B(m;\alpha)$ contains one $b_i$ (resp. $c_i$). Then take $d_i=b_i$ (resp. $d_i=c_i$) for all $i$.

\item The $m_i$ are linearly ordered by $<$ and no $b_i$ nor $c_i$ is greater then all the $m_i$.

Then the balls $B(m_i; b_i)$ and $B(m_i; c_i)$ contain no other point from $(b_i,c_i,m_i)_{i \in I}$. Then, $\alpha$ must cut the sequence $(m_i)$ and one can take $d_i = m_i$ for all $i$.

\item The $m_i$ are linearly ordered by $<$ and, say, each $b_i$ is greater than all the $m_i$.

Then each ball $B(m_i;a_i)$ contains no other point from $(b_i,c_i,m_i)_{i \in I}$. If $\alpha$ cuts the sequence $m_i$, than again one can take $d_i =m_i$. Otherwise, take a point $\gamma$ larger than all the $m_i$ but smaller than all the $d_i$. Applying \ref{orth} with $a$ there replaced by $\gamma$, we see that $(b_i)$ cannot be $\alpha$-indiscernible. Then take $d_i = b_i$ for all $i$.

\item The $m_i$ are pairwise incomparable.

The the sequence $(m_i)$ lies in case \textbf{II}, \textbf{III} or \textbf{IV}. The open balls $B(m_i;b_i)$ and $B(m_i;c_i)$ cannot contain any other point from $(b_i,c_i,m_i)_{i\in I}$. Considering the different cases, one sees easily that taking $d_i = m_i$ will work.
\end{enumerate}

This finishes the proof.

\end{proof}

\begin{rem}
If we define dp-minimal$^+$ analogously to strongly$^+$-dependent (see \cite{Sh863}), all theories studied in this section are dp-minimal$^+$.
\end{rem}

\end{document}